\newcolumntype{P}[1]{>{\centering\arraybackslash}p{#1}}
\newtheorem{theorem}{Theorem}[section]
\newtheorem{lem}[theorem]{Lemma}
\newtheorem{cor}[theorem]{Corollary}
\theoremstyle{definition}
\newtheorem{rem}[theorem]{Remark}
\newtheorem{ex}[theorem]{Example}
\begin{document}

\title[Exponential integrators for second-order in time PDEs]{Exponential integrators for second-order in time partial differential equations}

\author{Alexander Ostermann}
\email{alexander.ostermann@uibk.ac.at}
\address{Institut f\"ur Mathematik, Leopold-Franzens-Universit\"at Innsbruck, Technikerstra\ss e 13, A-6020 Innsbruck, Austria.}

\author{Duy Phan}
\email{duy.phan-duc@uibk.ac.at}
\address{Institut f\"ur Mathematik, Leopold-Franzens-Universit\"at Innsbruck, Technikerstra\ss e 13, A-6020 Innsbruck, Austria.}

\begin{abstract}
Two types of second-order in time partial differential equations (PDEs), namely semilinear wave equations and semilinear beam equations are considered. To solve these equations with exponential integrators, we present an approach to compute efficiently the action of the matrix exponential as well as those of related matrix functions. Various numerical simulations are presented that illustrate this approach. 
\end{abstract}

\keywords{semilinear wave equations, semilinear beam equations, exponential integrators, computation of matrix functions} 
\date{\today}

%%\thanks{This work was not supported by any organization}% <-this % stops a space
%\thanks{.}
%\thanks{$^{1}$
%{\tt\small }, {\tt\small duy.phan-duc@uibk.ac.at}
%}%
%}

\maketitle
%
%\tableofcontents

\section{Introduction}
We consider semilinear damped wave equations with damping term, structural (visco-elastic) damping term, and mass term 
\begin{subequations} \label{eq-wave-gen}
\begin{align}
\pdtt u &- \alpha \Delta u - \beta \Delta (\pdt u)+ \gamma \pdt u + \delta u  = g(u) + h(\pdt u), \quad (x,t) \in \Omega \times (0,T), \\
u|_{\partial \Omega} &= 0, \quad (x,t) \in \partial\Omega \times (0,+\infty), \\
u(0,x) &= p(x), \quad \pdt u(0,x) = q(x), \qquad x \in \Omega  
\end{align}
\end{subequations} 
on a bounded and open domain $\Omega \subset \R^d$ with smooth compact boundary $\partial \Omega$. The term $\Delta (\pdt u)$ is the structural (visco-elastic) damping while the term $\pdt u$ is the damping term. We assume that $\beta, \gamma$, and $\delta$ are three non-negative coefficients. Moreover, the coefficient $\alpha$ must be positive.   The initial data $p$ and $q$ are chosen from the usual energy space $(p, q) \in H_0^1(\Omega) \times L^2(\Omega) $. Concerning the nonlinear term, we recall some particular equations from the literature:
\begin{enumerate}[(i)]
\item the perturbed sine-Gordon equation (see \cite{PataZel06,GhiMar91,CarChoDlo08})
\begin{align} \label{eq-wave-sineGordon}
\pdtt u &- \alpha \Delta u - \beta \Delta (\pdt u)   + \gamma \pdt u = \sin u;
\end{align} 
\item the perturbed wave equation of quantum mechanics (see \cite{PataZel06,GhiMar91,CarChoDlo08})
\begin{align}\label{eq-wave-quantum}
\pdtt u &- \alpha \Delta u - \beta \Delta (\pdt u)  =- |u|^{q} u - |\pdt u|^{p} (\pdt u), \quad p,q \ge 0. 
\end{align}
\end{enumerate}
\medskip

Another type of second-order in time PDE is the Euler--Bernoulli beam equation with Kelvin--Voigt damping 
\begin{subequations} \label{eq-beam-gen}
\begin{align} 
 \pdtt u &+ \pdxx(\alpha \pdxx u  + \beta \pdxx(\pdt u))  + \gamma \pdt u + \delta u = g(u), \quad (x,t) \in (0, L) \times (0, T), \\
u(0,x) &= p(x), \quad \pdt u(0,x) = q(x), \qquad x \in (0,L),
\end{align}
\end{subequations}
where $u(t,x)$ denotes the deflection of the beam of its rigid body motion at time $t$ and position $x$. For given parameters $\alpha>0$ and $\beta \ge 0$, the moment function is 
\begin{align*}
m(t,x) = \alpha \pdxx u(t,x)  + \beta \pdxx(\pdt u)(t,x).
\end{align*}
The first derivative of the moment $m(t,x)$ with respect to the variable $x$ represents the shear force. The following boundary conditions will be considered, where $\xi \in \{0, L\}$:
\begin{enumerate}[(a)]
\item Hinged end: $u(t,\xi) = 0,~~m(t,\xi) = 0$. 
\item Clamped end: $u(t,\xi) = 0,~~~\pdx u(t,\xi) = 0$.
\item Free end: $m(t,\xi) = 0,~~~\pdx m(t,\xi) = 0$. 
\item Sliding end: $\pdx u(t,\xi) = 0,~~~m(t,\xi) = 0$.
\end{enumerate}
Depending on the set up of the beam model, various combinations of boundary conditions are of interest, for example: hinged-hinged boundary conditions
\begin{align*}
u(t,0) = 0,~~m(t,0) = 0,~~u(t,L) = 0,~~m(t,L) = 0.  
\end{align*}
Concerning semilinear beam equations, in \cite{EdaMor19SIAM,EdaMor19,AnsEsmYou11}, a nonlinear term $g(u) = -lu^3,~~ l >0$ was used when the authors considered a railway track model. 

\medskip

Both problems \eqref{eq-wave-gen} and \eqref{eq-beam-gen} can be rewritten as abstract ordinary differential equations in a product space $X = H  \times L^2(\Omega)$ by denoting a new variable $y(t,x) = (u(t,x), w(t,x))'$ as follows 
\begin{align*}
\dot{y}(t) = \cA y(t) + \cF(y(t)),
\end{align*}
where 
\begin{align*}
&\cA = \bmat{0 & I \\ -\alpha (-\Delta) - \delta I & -\beta (-\Delta) - \gamma I} \qquad \text{for \eqref{eq-wave-gen}}, \\
&\cA = \bmat{0 & I \\ -\alpha \pdxxxx  - \delta I & -\beta \pdxxxx  - \gamma I} \qquad \text{for \eqref{eq-beam-gen}}, \\
\end{align*}
and 
\begin{align*}
\mathcal{F}(u,w) = \bmat{0 \\ g(u) + h(w)}.
\end{align*}
The space $H$ and the domain of the operator $\cA$ will be chosen to be consistent with the boundary conditions. Here and henceforth, the transpose of a matrix $E$ is denoted by $E'$.  

\medskip
These types of equations have been studied extensively in many fields  of mathematics. For damped wave equations, see \cite{PataZel06,GhiMar91,CarChoDlo08,ChenFino21,Pon85,IkeTodYor13}; for Euler--Bernoulli beam equations, see\cite{MatSti15,Mor20,BanIto97,EdaMor19SIAM,EdaMor19,PauPhan20,PhanPau21,ItoMor98,LiuLiu98,AnsEsmYou11} and references therein. The time discretization of these equations, to the best of our knowledge, is usually carried out by standard integration schemes such as Runge--Kutta methods or multistep methods. In this article, we will consider exponential integrators to solve this class of PDEs. By spatial discretization of \eqref{eq-wave-gen} or of \eqref{eq-beam-gen}, we get a semi-discretization of the equation in matrix form
\begin{align} \label{eq-abstract-nonlinear1}
\dot{y}(t) = A y(t) + F(y(t)), \qquad y(0) = y_0 =  (p, q)', 
\end{align}
where
\begin{align} \label{eq-optA}
A = \bmat{ 0  & I \\ -\alpha S - \delta I & -\beta S - \gamma  I }.
\end{align}
and the square matrix $S$ is the discretized version of the operator $(-\Delta)$ or $\pdxxxx$. 
The linear part of \eqref{eq-abstract-nonlinear1} 
\begin{align} \label{eq-abstract-linear}
\dot{y}(t) = A y(t) , \qquad y(0) = y_0 = (p, q)', 
\end{align}
can be solved exactly, e.g., 
\begin{align} \label{eq-solexp}
y(t) = \me^{tA} y_0, \qquad t >0 
\end{align}
For the undamped wave equations  (i.e. $\beta = \gamma = 0$ in \eqref{eq-optA}), by using the matrix sine and matrix cosine functions, the explicit form of the matrix exponential  $\me^{tA}$  is easily obtained (see \cite[Section 3.2]{HochOst10}). Based on this formula, Gautschi (in \cite{Gau61}) and Deuflhard  (in \cite{Deu79}) developed a number of schemes to tackle semilinear second-order differential equations. Nevertheless, when damping terms appear in \eqref{eq-optA}, a direct approach to compute the matrix exponential $\me^{tA}$ is more involved and not yet discussed in the literature. Therefore, in this paper, we firstly present an approach to exactly evaluate the matrix exponential of \eqref{eq-optA}. 

\medskip

Let us briefly explain our procedure to compute the matrix exponential. We start by employing two linear transformations to represent the matrix $A$ as $A = \widetilde{Q}PCP' \widetilde{Q}'$ where the new matrix $C$ is a block diagonal matrix, i.e. $C = \diag(G_1,\cdots,G_n)$. Each block $G_i$ is a $2 \times 2$ matrix. The exponential of such a matrix $G_i$ will be computed explicitly. Regarding its eigenvalues, a suitable formula will be constructed. In this way the matrix exponential $\me^{tA}$ can be computed cheaply even for large values of $t$. We also discuss the cases $\beta = \gamma = \delta = 0$ and $\beta, \gamma \ll \alpha$ (see \cite[Section 3]{ItoMor98} for typical physical parameters). In both cases, the matrix $G_i$ has  usually two conjugate complex eigenvalues. To reduce the computation cost, we avoid complex arithmetic. The exact matrix exponential will not only give us a huge advantage to solve the class of linear damped wave equations or linear beam equations but also be valuable in computing solutions of semilinear problems. The numerical schemes for the full equation \eqref{eq-abstract-nonlinear} were constructed by incorporating the exact solution of \eqref{eq-abstract-linear} in an appropriate way. In the literature, these methods were  investigated by many authors (see, e.g., \cite{DoOngThai17,HochOst10,Krog05,StrehWei92,CoxMat02,LuanOst14,HochOst05a,HochOst05b}). To employ these known exponential integrators, the core point is the computation of related matrix functions $\varphi_k(tA)$. As for the matrix exponential, we will use two linear transformations and compute the action of the matrix functions $\varphi_k(tG_i)$. Explicit formulas will be established in the same way as for computing the matrix exponential $\me^{tG_i}$. Concerning the computation of matrix functions, we refer to the review work by Higham and Al-Mohy \cite{HigAl10} as well as the monograph by Higham \cite{Hig08}.

\bigskip 

The outline of the paper is as follows. We start with the discussion of computing the matrix exponential $\me^{tA}$ in section \ref{sec-linear}.  Two linear transformations $P$ and $Q$ will be presented. The computations of the matrix exponential $\me^{tG_i}$ will be discussed for three different cases. In simulations, instead of computing the matrix exponential, we will rather compute its action on a given vector. A detailed instruction will be presented in remark \ref{rem-action}. In section \ref{sec-semilinear} we recall some exponential integrators and discuss an approach to compute the action of the related matrix functions $\varphi_k(tA)$. The procedure will be summarized in section \ref{sec-proc}. In section \ref{sec-numex}, we will present some numerical examples of semilinear equations. The operators $(-\Delta)$ and $\pdxxxx$ will be discretized by finite differences. We will use exponential integrators for the time integration of these examples.  {Some comparisons with standard integrators will be presented in section \ref{sec-comparison} to clarify the efficiency of our approach}.

\section{Exact matrix exponential}  \label{sec-linear}
In this section, we propose an approach to compute efficiently the matrix exponential $\me^{tA}$ for a matrix $A$ of the form \eqref{eq-optA}. With this at hand, the solution of linear system \eqref{eq-abstract-linear} can be evaluated for an arbitrary time  $t>0$ in a fast and reliable way.
\subsection{Two linear transformations}
The key idea is to transform $A$ to a simple block-diagonal matrix for which the exponential can be computed cheaply. 
\begin{lem} \label{lem-trans-Q}
Assume that there exist an orthogonal matrix $Q$ and a diagonal matrix $D = \diag\{\lambda_1,\ldots, \lambda_n\}$ such that $S = QDQ'$, then the matrix $A$ of form \eqref{eq-optA} can be transformed to the block form 
\begin{align} \label{eq-optB}
B = \bmat{0 & I \\ D_1 & D_2},
\end{align}
where $D_1$ and $D_2$ are two diagonal matrices. 
\end{lem}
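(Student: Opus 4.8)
The plan is to exhibit the change of variables explicitly and verify the claim by a single block computation. Since $S$ is $n\times n$, the matrix $A$ of \eqref{eq-optA} is $2n\times 2n$, assembled from the four $n\times n$ blocks $0$, $I$, $-\alpha S-\delta I$ and $-\beta S-\gamma I$. The natural candidate for the transformation is the block-diagonal matrix
\begin{align*}
\widetilde{Q} = \bmat{Q & 0 \\ 0 & Q},
\end{align*}
which is orthogonal precisely because $Q$ is, since $\widetilde{Q}'\widetilde{Q} = \bmat{Q'Q & 0 \\ 0 & Q'Q} = I_{2n}$.

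First I would compute $B := \widetilde{Q}' A \widetilde{Q}$ by carrying out the product of the three block matrices. The $(1,1)$ and $(1,2)$ blocks are immediate: they become $0$ and $Q'IQ=I$. For the bottom row one conjugates $-\alpha S-\delta I$ and $-\beta S-\gamma I$ by $Q$. The one algebraic fact that is used is that the hypothesis $S=QDQ'$ together with $Q'Q=QQ'=I$ gives $Q'SQ = D$; hence
\begin{align*}
Q'(-\alpha S-\delta I)Q = -\alpha D-\delta I, \qquad Q'(-\beta S-\gamma I)Q = -\beta D-\gamma I,
\end{align*}
and both right-hand sides are diagonal. Setting $D_1 = -\alpha D-\delta I = \diag\{-\alpha\lambda_1-\delta,\ldots,-\alpha\lambda_n-\delta\}$ and $D_2 = -\beta D-\gamma I = \diag\{-\beta\lambda_1-\gamma,\ldots,-\beta\lambda_n-\gamma\}$ yields exactly the block form \eqref{eq-optB}. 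Because $\widetilde{Q}$ is orthogonal, $B = \widetilde{Q}'A\widetilde{Q}$ is a (orthogonal) similarity transformation of $A$, which is the sense in which "$A$ can be transformed to $B$" is meant.

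Then I would close the argument. There is essentially no genuine obstacle here: the statement is a routine verification, and the only points that require any attention are (i) checking that $\widetilde{Q}$ is orthogonal and of the correct size $2n\times 2n$, so that the block multiplication is legitimate and $B$ is similar to $A$, and (ii) using the consequence $Q'SQ=D$ rather than merely $S=QDQ'$. For later use one may also observe that permuting the rows and columns of $B$ so that the two components associated with the index $i$ are grouped together turns $B$ into the block-diagonal matrix $\diag(G_1,\dots,G_n)$ with $G_i = \bmat{0 & 1 \\ -\alpha\lambda_i-\delta & -\beta\lambda_i-\gamma}$, which is presumably the $C=\diag(G_1,\dots,G_n)$ referred to in the introduction and the object whose exponential is computed next.
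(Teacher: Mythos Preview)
Your proof is correct and essentially identical to the paper's: both use the block-diagonal orthogonal matrix $\widetilde{Q}=\bmat{Q & 0 \\ 0 & Q}$ and the identity $Q'SQ=D$ to obtain $B=\widetilde{Q}'A\widetilde{Q}$ with $D_1=-\alpha D-\delta I$ and $D_2=-\beta D-\gamma I$. The only cosmetic difference is that the paper writes the factorization as $A=\widetilde{Q}B\widetilde{Q}'$ rather than $B=\widetilde{Q}'A\widetilde{Q}$, and your closing observation about the permutation to $\diag(G_1,\dots,G_n)$ anticipates precisely the content of the next lemma in the paper.
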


\begin{proof}
By substituting $S = QDQ'$ and $QQ' = I$ into \eqref{eq-optA}, we get that 
\begin{align*}
A &= \bmat{ 0  & QQ' \\ -\alpha QDQ' - \delta QQ'   & -\beta QDQ' - \gamma QQ'} \\
&= \bmat{Q & 0 \\ 0 & Q} \bmat{0 & I \\ -\alpha D - \delta I & - \beta D - \gamma I  }  \bmat{Q' & 0 \\ 0 & Q'}.
\end{align*}
The proof is complete by identifying two diagonal matrices 
$D_1 = -\alpha D - \delta I$ and  $D_2 = -\beta D - \gamma I$. 
\end{proof}

\medskip
\begin{lem} \label{lem-trans-P}
Consider $P \in \R^{2n \times 2n}$ the permutation matrix satisfying 
\begin{align} \label{eq-matP}
P_{i,2i-1} &= 1, \quad P_{i+n,2i} = 1 \quad \text{~~for~~} 1 \le i \le n, \quad  P_{k,l} = 0  \text{~~else}.  
\end{align}
The matrix $B$ given in \eqref{eq-optB} can be transformed under the permutation $P$ to a block diagonal matrix $C$, i.e.  $B = PCP'$, where
\begin{align*}
C = \bmat {
G_1 & 0 & \cdots & 0 \\
0 & G_2 & \cdots & 0 \\
\vdots & \vdots & \ddots & 0 \\
0 & 0 & \cdots & G_n 
} \quad  \text{with} \quad G_i = \bmat{0 & 1  \\ -\alpha \lambda_i - \delta & -\beta \lambda_i - \gamma}.
\end{align*}
\end{lem}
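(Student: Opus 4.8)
The plan is to show that the permutation $P$ defined by \eqref{eq-matP} simply interleaves the two blocks of size $n$ that make up $B$, grouping the $i$-th coordinate of the first block with the $i$-th coordinate of the second block, and that conjugation by $P$ therefore turns the block structure $\bmat{0 & I \\ D_1 & D_2}$ into the block-diagonal form with $2\times 2$ blocks $G_i$. Since $P$ is a permutation matrix, it is orthogonal, so $P^{-1} = P'$ and the claim $B = PCP'$ is equivalent to $C = P'BP$; I would work with the latter, as it is cleaner to track where the standard basis vectors go.

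First I would record the action of $P$ on basis vectors. From \eqref{eq-matP}, the nonzero entries say $P e_{2i-1} = e_i$ and $P e_{2i} = e_{i+n}$ for $1 \le i \le n$; equivalently $P' e_i = e_{2i-1}$ and $P' e_{i+n} = e_{2i}$. In other words, $P'$ sends the ``first half'' index $i$ to the odd index $2i-1$ and the ``second half'' index $i+n$ to the even index $2i$. Next I would compute the $(k,l)$ entry of $C := P'BP$ as $(P'BP)_{kl} = e_k' P' B P e_l = (P e_k)' B (P e_l)$, and split into the four cases according to whether $k,l$ are odd or even. Writing $k = 2i-1$ or $k = 2i$ and $l = 2j-1$ or $l = 2j$, the vector $P e_l$ is $e_j$ or $e_{j+n}$, and $P e_k$ is $e_i$ or $e_{i+n}$, so each entry of $C$ equals a single entry of $B$: the block $(1,1)$ of $B$ (which is $0$), the block $(1,2)$ of $B$ (which is $I$), the block $(2,1)$ of $B$ (which is $D_1 = \diag(-\alpha\lambda_i - \delta)$), or the block $(2,2)$ of $B$ (which is $D_2 = \diag(-\beta\lambda_i - \gamma)$). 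Because $D_1, D_2$ are diagonal, the off-diagonal contributions vanish unless $i = j$, which forces $C$ to be block diagonal in $2\times 2$ blocks indexed by $i$. Reading off the four entries of the $i$-th block gives
\begin{align*}
C_{2i-1,2i-1} = 0, \quad C_{2i-1,2i} = 1, \quad C_{2i,2i-1} = -\alpha\lambda_i - \delta, \quad C_{2i,2i} = -\beta\lambda_i - \gamma,
\end{align*}
which is exactly $G_i$.

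The computation is entirely routine; the only place requiring a little care is bookkeeping the index ranges, namely verifying that $P$ as defined really is a permutation matrix (each row and each column has exactly one $1$) so that $P' = P^{-1}$, and keeping the four odd/even cases straight. I do not anticipate a genuine obstacle. An alternative, perhaps slicker, write-up would be to exhibit $P$ abstractly as the matrix of the permutation $\sigma$ of $\{1,\dots,2n\}$ with $\sigma(2i-1) = i$, $\sigma(2i) = i+n$, and then invoke the general fact that conjugating a matrix by a permutation matrix permutes rows and columns by $\sigma$; applying this to $B$ and noting that $\sigma$ maps $\{2i-1,2i\}$ onto $\{i, i+n\}$ immediately yields the block-diagonal form with blocks $G_i$. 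I would probably present the direct entrywise verification since it is self-contained and matches the style of the proof of Lemma~\ref{lem-trans-Q}.
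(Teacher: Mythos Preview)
Your proposal is correct and follows essentially the same approach as the paper: a direct entrywise verification using the explicit definition of $P$. The only cosmetic difference is that the paper verifies $B = PCP'$ by computing $(PCP')_{k,l}$ for the three types of nonzero entries of $B$, whereas you verify the equivalent identity $C = P'BP$ via $(P'BP)_{kl} = (Pe_k)'B(Pe_l)$ and a systematic four-case split on the parity of $k$ and $l$; your organization is arguably a touch cleaner, but the substance is identical.
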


\begin{proof}
Following the definitions of the matrices $B$ and $C$, for $1 \le i \le n$ we have 
\begin{align*}
B_{i, n+i} &=1,\quad {B_{n+i,i} = -\alpha \lambda_i-\delta},\quad B_{n+i, n+i} =   -\beta \lambda_i -  \gamma, \\
C_{2i-1, 2i} &=1,\quad {C_{2i, 2i-1} = -\alpha \lambda_i-\delta},\quad C_{2i, 2i} =   -\beta \lambda_i - \gamma.
\end{align*}
We will prove that $B = P C P'$. Indeed, for $1 \le i \le n$, we have 
\begin{align*}
(P C P')_{n+i, n+i} &= P_{n+i,2i} (C P')_{2i, n+i} = (C P')_{2i, n+i} = C_{2i,2i}P'_{2i,n+i} \\
&=  C_{2i,2i} P_{i+n,2i} = C_{2i,2i} = -\beta \lambda_i - \gamma = B_{n+i, n+i}, \\
(P C P')_{i, n+i} &= P_{i,2i-1} (C P')_{2i-1, n+i} = (CP')_{2i-1, n+i} = C_{2i-1,2i} P'_{2i,n+i} \\
&=  C_{2i-1,2i}  = 1 =  B_{i,n+i},  \\ 
(P C P')_{n+i, i} &=  P_{n+i,2i} (CP')_{2i, i} = (C P')_{2i, i} = C_{2i,2i-1} P'_{2i-1,i} \\ 
&= C_{2i,2i-1} = -\alpha \lambda_i - \delta = B_{n+i, i}. 
\end{align*}
We will not be concerned with the remaining elements of $B$ and $C$ since they are all zero. Thus, the proof is complete. 
\end{proof}

\medskip
\begin{ex} For $n = 2$ and $n = 3$ the permutation matrices $P$ have the following form 
\begin{align*}
P_2 = \bmat{1 & 0 & 0 &0 \\ 0 & 0 & 1 & 0 \\ 0 & 1 & 0 & 0 \\ 0 & 0 & 0 & 1}, \quad P_3 = \bmat{1 & 0 & 0 & 0 & 0 & 0  \\ 0 & 0 & 1 & 0 & 0 & 0 \\ 0 & 0 & 0 &0 & 1 & 0  \\ 0 & 1 & 0 & 0 & 0 &0  \\ 0 &0 &0 & 1 & 0 &0 \\ 0 & 0 & 0 & 0 & 0 & 1}.
\end{align*}
\end{ex}
Next, we recall some important properties of matrix functions (see \cite[Theorem 1.13]{Hig08} or \cite[Theorem 2.3]{HigAl10}).
\begin{theorem} \label{the-propmatrixcomputations}
Let $A \in \C^{n \times n}$ and $f$ be defined on the spectrum of $A$. Then 
\begin{enumerate}[(a)]
\item $f(A)$ commutes with $A$; 
\item $f(A') = f(A)' $; 
\item $f(XAX^{-1}) = Xf(A) X^{-1}$.
\item The eigenvalues of $f(A)$ are $f(\lambda_i)$, where $\lambda_i$ are the eigenvalues of $A$.
\item If $A = (A_{ij})$ is block triangular then $F=f(A)$ is block triangular with the same block structure as $A$, and $F_{ii} = f(A_{ii})$. 
\item If $A = \diag(A_{11}, A_{22}, \dots, A_{mm})$ is block diagonal then 
\begin{align*}
f(A) = \diag(f(A_{11}), f(A_{22}), \dots, f(A_{mm})).
\end{align*}
\end{enumerate}
\end{theorem}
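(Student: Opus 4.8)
This is a standard compendium of properties of matrix functions, so the plan is to indicate how each item follows from the definition rather than to reproduce all details, which are exactly as in \cite[Chapter~1]{Hig08}. I would work throughout with the \emph{Hermite interpolation} definition of $f(A)$: there is a polynomial $p$, of minimal degree, such that $p$ and $f$ together with the appropriate number of derivatives (dictated by the sizes of the Jordan blocks of $A$) agree at each eigenvalue of $A$, and one sets $f(A) := p(A)$. All six claims then reduce to elementary facts about polynomials in a single matrix.

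For (a), $p(A)$ commutes with $A$ because every power $A^k$ does. For (b) and (c), the point to exploit is that $A'$ and $XAX^{-1}$ have the same spectrum and the same Jordan canonical form as $A$, so the \emph{same} interpolating polynomial $p$ serves for all three matrices; then $f(A') = p(A') = p(A)' = f(A)'$ and $f(XAX^{-1}) = p(XAX^{-1}) = X p(A) X^{-1} = X f(A) X^{-1}$, using that $E \mapsto E'$ and $E \mapsto X E X^{-1}$ are homomorphisms on polynomials in one matrix. For (d), I would pass to the Jordan form $A = Z J Z^{-1}$ with $J = \diag(J_1, \dots, J_p)$; by (c) it is enough to look at a single Jordan block $J_k$ with eigenvalue $\lambda$, and $f(J_k) = p(J_k)$ is upper triangular with constant diagonal entry $p(\lambda) = f(\lambda)$, so the eigenvalues of $f(A)$ are precisely $f(\lambda_1), \dots, f(\lambda_n)$.

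For (e), if $A = (A_{ij})$ is block triangular then so is every power $A^k$, with $(A^k)_{ii} = (A_{ii})^k$; hence $F = p(A)$ is block triangular with the same structure and $F_{ii} = p(A_{ii})$. What remains is to identify $p(A_{ii})$ with $f(A_{ii})$, and this is the one place that needs care: I must check that the polynomial $p$ built for $A$ still satisfies all the Hermite conditions demanded by the (in general smaller) Jordan structure of each diagonal block $A_{ii}$. This works because the spectrum of $A_{ii}$ is contained in that of $A$ and, for a block triangular matrix, the largest Jordan block of $A_{ii}$ at an eigenvalue $\lambda$ is no larger than the largest Jordan block of $A$ at $\lambda$; so the interpolation data for $A$ subsume those needed for $A_{ii}$, giving $f(A_{ii}) = p(A_{ii}) = F_{ii}$. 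Finally (f) is the special case of (e) in which $A$ is simultaneously block upper and block lower triangular, so $f(A)$ comes out block diagonal with diagonal blocks $f(A_{ii})$. The only genuine obstacle in the whole argument is this consistency-of-interpolation-data step in (e); everything else is routine manipulation of polynomials in a matrix.
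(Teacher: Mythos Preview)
Your proof sketch is correct and follows the standard route via the Hermite interpolation definition, which is exactly the treatment in \cite[Chapter~1]{Hig08}. Note, however, that the paper does not give its own proof of this theorem at all: it simply quotes the statement and refers the reader to \cite[Theorem~1.13]{Hig08} and \cite[Theorem~2.3]{HigAl10}. So there is no in-paper argument to compare against; your write-up goes further than the paper does, and the one delicate point you flag in~(e) --- that the interpolation data for $A$ subsume those for each diagonal block $A_{ii}$ --- is indeed the only place where any care is needed.
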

A direct consequence of this theorem is the following result. 
\begin{theorem} \label{the-exp}
Assume that there exist an orthogonal matrix $Q$ and a diagonal matrix $D = \diag\{\lambda_1, \cdots, \lambda_n\}$ such that $S = Q D Q'$.
Then, for $t >0$, the exponential of the matrix $tA$ is computed as follows 
\begin{align} \label{eq-exp-A}
\me^{tA} = \bmat{Q & 0 \\ 0 & Q} P 
 \bmat {
\me^{tG_1} & 0 & \cdots & 0 \\
0 & \me^{tG_2} & \cdots & 0 \\
\vdots & \vdots & \ddots & 0 \\
0 & 0 & \cdots & \me^{tG_n} 
} P' \bmat{Q' & 0 \\ 0 & Q'},
\end{align}
where $P \in \R^{2n \times 2n}$ is defined by \eqref{eq-matP} and $G_i = \bmat{0 & 1  \\ -\alpha \lambda_i - \delta & -\beta \lambda_i - \gamma}$. 
\end{theorem}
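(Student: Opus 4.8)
The plan is to chain together the two linear transformations from Lemmas~\ref{lem-trans-Q} and \ref{lem-trans-P} with the functional calculus properties collected in Theorem~\ref{the-propmatrixcomputations}. First I would record the factorization of $A$ that is implicit in the two lemmas. Lemma~\ref{lem-trans-Q} gives $A = \bmat{Q & 0 \\ 0 & Q} B \bmat{Q' & 0 \\ 0 & Q'}$ with $B$ as in \eqref{eq-optB}, and Lemma~\ref{lem-trans-P} gives $B = PCP'$ with $C = \diag(G_1,\ldots,G_n)$. Writing $\widetilde Q = \bmat{Q & 0 \\ 0 & Q}$, which is orthogonal because $Q$ is, and noting that $P$ is a permutation matrix hence also orthogonal, we obtain $A = \widetilde Q P\, C\, P' \widetilde Q'$, i.e. $A = U C U^{-1}$ with $U = \widetilde Q P$ invertible and $U^{-1} = U' = P' \widetilde Q'$.

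Next I would apply the functional calculus. Since $S$ has real spectrum $\{\lambda_1,\ldots,\lambda_n\}$, each $G_i$ has a well-defined spectrum and so does $C$; the exponential function is entire, so $f(z) = \me^{tz}$ is defined on the spectrum of $A$ for every $t$, and the hypotheses of Theorem~\ref{the-propmatrixcomputations} are met. By part~(c) of that theorem, $\me^{tA} = \me^{t U C U^{-1}} = U\, \me^{tC}\, U^{-1}$. By part~(f), $\me^{tC} = \diag(\me^{tG_1}, \ldots, \me^{tG_n})$, since $C$ is block diagonal with the blocks $G_i$ and the scaling by $t$ preserves this block structure. Substituting $U = \widetilde Q P$ and $U^{-1} = P' \widetilde Q'$ and expanding $\widetilde Q$ back into $\bmat{Q & 0 \\ 0 & Q}$ yields exactly \eqref{eq-exp-A}.

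There is really no serious obstacle here: the statement is labelled ``a direct consequence'' and that is accurate. The only points that deserve a word of care are (i) checking that $\widetilde Q$ is genuinely orthogonal and that $P' = P^{-1}$, so that $U^{-1} = P'\widetilde Q'$ rather than some more complicated inverse, and (ii) making sure the block-diagonal structure is preserved under scaling by $t$ before invoking part~(f) — equivalently one may just apply part~(f) to $tC = \diag(tG_1,\ldots,tG_n)$ directly. Everything else is the bookkeeping of composing the two similarity transformations already established in Lemmas~\ref{lem-trans-Q} and \ref{lem-trans-P}. I would therefore present the proof in three short lines: assemble $A = \widetilde Q P C P' \widetilde Q'$; invoke Theorem~\ref{the-propmatrixcomputations}(c) to pull the exponential through the similarity; invoke Theorem~\ref{the-propmatrixcomputations}(f) to evaluate $\me^{tC}$ blockwise.
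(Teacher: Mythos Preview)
Your proposal is correct and follows essentially the same approach as the paper: assemble the factorization $A = \widetilde Q P C P' \widetilde Q'$ from Lemmas~\ref{lem-trans-Q} and~\ref{lem-trans-P}, then apply parts~(c) and~(f) of Theorem~\ref{the-propmatrixcomputations}. The extra remarks you make about orthogonality of $\widetilde Q$ and $P$ and about $\me^{tz}$ being entire are fine but not strictly needed for the argument.
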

\begin{proof}
The two lemmas \ref{lem-trans-Q} and \ref{lem-trans-P} imply that
\begin{align*}
A = \bmat{Q & 0 \\ 0 & Q} B  \bmat{Q' & 0 \\ 0 & Q'}
= \bmat{Q & 0 \\ 0 & Q} P 
 \bmat {
{G_1} & 0 & \cdots & 0 \\
0 & {G_2} & \cdots & 0 \\
\vdots & \vdots & \ddots & 0 \\
0 & 0 & \cdots & {G_n} 
} P' \bmat{Q' & 0 \\ 0 & Q'}.  
\end{align*}
Formula \eqref{eq-exp-A} is proved by using the properties (c) and (f) in Theorem \ref{the-propmatrixcomputations}. 
\end{proof}
\begin{rem}
We need to compute the matrix exponential of the small matrices $tG_i$. This will be presented in the next section. The exponential matrix $\me^{tG_i}$ can be computed explicitly by using formula \eqref{eq-exp-G-1}, \eqref{eq-exp-G-2}, or \eqref{eq-exp-G-3} depending on the sign of $(\beta \lambda_i + \gamma)^2-4(\alpha \lambda_i + \delta)$.   
\end{rem}
\begin{rem} \label{rem-action}
In practical situations, {to reduce the computational cost}, we will compute the action of the matrix exponential to a vector instead of computing it explicitly. In \eqref{eq-exp-A}, $P$ and $Q$ are two square matrices of orders $2n$ and $n$, respectively. Since $P$ is a permutation matrix, it can be stored, however,  as a column matrix with $n$ entries by indicating the positions of the non-zero elements, for example:
\begin{align*}
P_3 = \bmat{1 & 0 & 0 & 0 & 0 & 0  \\ 0 & 0 & 1 & 0 & 0 & 0 \\ 0 & 0 & 0 &0 & 1 & 0  \\ 0 & 1 & 0 & 0 & 0 &0  \\ 0 &0 &0 & 1 & 0 &0 \\ 0 & 0 & 0 & 0 & 0 & 1} \rightarrow \bmat{1 \\ 3 \\ 5 \\ 2 \\ 4 \\ 6},\quad P'_3 = \bmat{1 & 0 & 0 & 0 & 0 & 0  \\ 0 & 0 & 0 & 1 & 0 & 0 \\ 0 & 1 & 0 &0 & 0 & 0  \\ 0 & 0 & 0 & 0 & 1 &0  \\ 0 & 0 & 1 & 0 & 0 &0 \\ 0 & 0 & 0 & 0 & 0 & 1} \rightarrow \bmat{1 \\ 4 \\ 2 \\ 5 \\ 3 \\ 6}
\end{align*}
The block matrix $[\me^{tG_i}]$ can be stored as a $2 \times 2n$ matrix. Given a compound vector $v_0 = \bmat{a, b}'$, where $a$ and $b$ are two column vectors with $n$ entries, we start by evaluating a new vector $v_1 = [Q'a, Q'b]'$. Next, the action of the permutation matrix $P'$ to the vector $v_1$ is the reorder of its entries to get a new vector $v_2$. Then we compute the multiplication of the block exponential matrix with the vector $v_3$ by cheaply multiplying each block $2 \times 2$ matrix $\me^{tG_i}$ with two corresponding elements of $v_2$. Analogously applying $P$ and $Q$, we get an exact valuation of the action of the matrix exponential to an arbitrary vector.
\end{rem}

\subsection{The matrix exponential $G_i$}
From \eqref{eq-exp-A}, instead of evaluating the matrix exponential of $A \in \R^{2n \times 2n}$, we need to compute the matrix exponential of each $G_i \in \R^{2 \times 2}$. In this section, we give some explicit formulas. For simplification, we omit the index $i$. 
\begin{theorem} \label{the-funcG}
Assume that $f$ is an analytic function. For a $2 \times 2$ matrix $G$, the matrix function $f(G)$ can be computed explicitly as
\begin{align} \label{eq-funcG}
f(G) = \frac{f(z_1) - f(z_2)}{z_1-z_2} G + \frac{z_1 f(z_2) - z_2 f(z_1)}{z_1-z_2} I,
\end{align}
where $z_1$ and $z_2$ are the two distinct eigenvalues of the matrix $G$.
In case the matrix $G$ has a double eigenvalue $z_1$, we get 
\begin{align} \label{eq-funcG-repeig}
f(G) = f'(z_1) G + \left(f(z_1) - f'(z_1) z_1 \right) I.  
\end{align} 
\end{theorem}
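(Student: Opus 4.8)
The plan is to exploit the fact that a $2 \times 2$ matrix has minimal polynomial of degree at most two (Cayley--Hamilton gives $G^2 = (z_1+z_2)G - z_1 z_2 I$), so that $f(G)$ is necessarily an affine combination $f(G) = aG + bI$ of $G$ and the identity; the two scalar coefficients $a,b$ are then pinned down by requiring that the same relation hold on the spectrum of $G$, i.e.\ $f(z) = az+b$ at the eigenvalue(s) $z$ of $G$, with a derivative condition in the confluent case. Two cases are treated according to whether the eigenvalues are distinct or coincide.

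\textbf{Distinct eigenvalues.} Since $z_1 \neq z_2$, the matrix $G$ is diagonalizable, $G = X\,\diag(z_1,z_2)\,X^{-1}$ for some invertible $X$. Define
\[
a = \frac{f(z_1)-f(z_2)}{z_1-z_2}, \qquad b = \frac{z_1 f(z_2) - z_2 f(z_1)}{z_1-z_2},
\]
which is precisely the Cramer's-rule solution of the $2\times 2$ Vandermonde system $a z_j + b = f(z_j)$, $j = 1,2$, invertible because $z_1 \neq z_2$. Hence $\diag(f(z_1),f(z_2)) = a\,\diag(z_1,z_2) + bI$. Using Theorem \ref{the-propmatrixcomputations}(f) to evaluate $f$ on the diagonal matrix and Theorem \ref{the-propmatrixcomputations}(c) to pass back through the similarity $X$,
\[
f(G) = X\,\diag(f(z_1),f(z_2))\,X^{-1} = a\,X\,\diag(z_1,z_2)\,X^{-1} + bI = aG + bI,
\]
which is \eqref{eq-funcG}.

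\textbf{Repeated eigenvalue $z_1$.} If $G = z_1 I$ the claim is immediate: $f(G) = f(z_1)I$, and the right-hand side of \eqref{eq-funcG-repeig} equals $f'(z_1)z_1 I + (f(z_1) - f'(z_1)z_1)I = f(z_1)I$. Otherwise $G$ is similar to a single nontrivial Jordan block, $G = XJX^{-1}$ with $J = z_1 I + N$ and $N = \bmat{0 & 1 \\ 0 & 0}$. By the standard prescription for a matrix function at a Jordan block (this is where analyticity of $f$ at $z_1$, hence availability of $f'(z_1)$, enters), $f(J) = f(z_1)I + f'(z_1)N = f(z_1)I + f'(z_1)(J - z_1 I) = f'(z_1)J + (f(z_1) - f'(z_1)z_1)I$. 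Conjugating by $X$ via Theorem \ref{the-propmatrixcomputations}(c) yields $f(G) = f'(z_1)G + (f(z_1)-f'(z_1)z_1)I$, i.e.\ \eqref{eq-funcG-repeig}.

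\textbf{Main obstacle.} There is no genuine difficulty here: the only structural input is the quadratic Cayley--Hamilton relation forcing the affine form, after which everything reduces to a $2\times 2$ linear solve and bookkeeping with similarities. The one point deserving care is the confluent case, where one must separate the scalar matrix $G = z_1 I$ from a nontrivial Jordan block and invoke the Jordan-block formula for $f$. An alternative route expands $f$ in a power series about a point enclosing the spectrum and reduces each power $G^k$ to the form $aG+bI$ via Cayley--Hamilton; this works equally well but would require a remark on convergence, so the argument above via diagonalization and the Jordan form is preferable.
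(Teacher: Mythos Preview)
Your proof is correct. Both you and the paper arrive at the affine ansatz $f(G)=aG+bI$ and determine $a,b$ by interpolation at the spectrum (with a derivative condition in the confluent case), but the mechanics differ. The paper stays algebraic: it invokes Cayley--Hamilton to get $p(G)=0$, writes $f(z)=q(z)p(z)+r(z)$ with $\deg r<2$, concludes $f(G)=r(G)=d_1G+d_0I$, and then reads off $d_0,d_1$ from $r(z_j)=f(z_j)$ (resp.\ $r(z_1)=f(z_1)$, $r'(z_1)=f'(z_1)$). You instead pass to the diagonal or Jordan form of $G$ and apply the similarity and block-diagonal rules for matrix functions (the paper's Theorem~\ref{the-propmatrixcomputations}). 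Your route is perhaps more structural and makes the role of analyticity (needed for $f'(z_1)$ at a nontrivial Jordan block) explicit; the paper's polynomial-remainder argument is more self-contained and avoids any case split on whether $G$ is a scalar matrix. Either way the content is the same two-point (Hermite) interpolation.
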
 
\begin{proof}
Let $p(z)$ be the characteristic polynomial of the matrix $G$ and assume for a moment that the equation $p(z) = 0$ has two distinct roots $z_1$ and $z_2$. The Cayley--Hamilton theorem states that $p(G) = 0$. \\
The function $f$ can be rewritten in the form $f(z) = q(z) p(z) + r(z)$ where $q(z)$ is some quotient and $r(z)$ is a remainder polynomial with $0 \le \deg r(z) <2$. From $p(G) = 0$, we obtain 
\begin{align*}
f(G) = r(G) =  d_1 G + d_0 I. 
\end{align*}
To complete the proof, we determine the coefficients $d_1$ and $d_0$. From $f(z_1) = r(z_1)$ and $f(z_2) = r(z_2)$, we obtain that 
\begin{align*}
d_1 = \frac{f(z_1) - f(z_2)}{z_1 - z_2}, \qquad  {d_0} = \frac{z_1 f(z_2) - z_2 f(z_1)}{z_1-z_2}. 
\end{align*}
In case of a double eigenvalue $z_1$, we use the conditions $f(z_1) = r(z_1)$ and $f'(z_1) = r'(z_1)$. As a consequence, we obtain that $d_1 = f'(z_1)$ and $d_0 = f(z_1) - f'(z_1) z_1$. 
\end{proof}
We remark that similar formulas can be found in the work of Bernstein and So \cite{BerSo93} or Cheng and Yau \cite{ChengYau97}. To reduce the computational cost, we try to avoid complex arithmetic. 
\begin{lem} \label{lem-exp-G}
Assume that the matrix $G$ is of the form 
\begin{align*}
G =  \bmat{0 & 1 \\ -\alpha \lambda - \delta & -\beta \lambda - \gamma}
\end{align*}
and denote $m = -\frac{1}{2} (\beta \lambda + \gamma )$. 
\begin{enumerate}[(i)]
\item If $(\beta \lambda +  \gamma)^2 > 4 (\alpha \lambda + \delta)$, denoting $n = \frac{1}{2} \sqrt{(\beta \lambda + \gamma)^2-4 (\alpha \lambda+ \delta)  } $, the exponential matrix $\me^{tG}$ can be computed explicitly as follows 
\begin{align} \label{eq-exp-G-1}
\me^{tG} = \frac{\me^{t(m+n)}-\me^{t(m-n)}}{2n}\bmat{
-m-n & 1 \\ n^2 - m^2 & m-n 
} + \me^{t(m+n)} I.
\end{align}
\item If $(\beta \lambda +  \gamma)^2 = 4 (\alpha \lambda + \delta)$, we obtain that 
\begin{align} \label{eq-exp-G-2}
\me^{tG} = \me^{tm} \bmat{1-tm & t \\ -tm^2 & tm+1}. 
\end{align}
\item If $(\beta \lambda +  \gamma)^2 < 4 (\alpha \lambda + \delta)$, denoting  $n = \frac{1}{2} \sqrt{4 (\alpha \lambda + \delta) - (\beta \lambda + \gamma)^2 } $, we get that 
\begin{align} \label{eq-exp-G-3}
\me^{tG} =  \frac{\me^{tm}\sin (tn)}{n} \bmat{-m & 1 \\ -n^2 - m^2 & m} + e^{tm} \cos (tn) I. 
\end{align}
\end{enumerate}
\end{lem}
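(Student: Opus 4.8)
The plan is to apply Theorem~\ref{the-funcG} with $f(z) = \me^{tz}$ directly to the matrix $G$, computing its eigenvalues via the characteristic polynomial and then substituting into formulas \eqref{eq-funcG} and \eqref{eq-funcG-repeig}. First I would write down the characteristic polynomial of $G$. Since $G = \bmat{0 & 1 \\ -\alpha\lambda-\delta & -\beta\lambda-\gamma}$, its trace is $-\beta\lambda-\gamma$ and its determinant is $\alpha\lambda+\delta$, so $p(z) = z^2 + (\beta\lambda+\gamma)z + (\alpha\lambda+\delta)$. The roots are $z_{1,2} = \tfrac12\bigl(-(\beta\lambda+\gamma) \pm \sqrt{(\beta\lambda+\gamma)^2 - 4(\alpha\lambda+\delta)}\bigr) = m \pm \sqrt{m^2 - (\alpha\lambda+\delta)}$, and the sign of the discriminant $(\beta\lambda+\gamma)^2 - 4(\alpha\lambda+\delta)$ determines which of the three cases we are in. In case (i) the eigenvalues are the real numbers $z_1 = m+n$, $z_2 = m-n$ with $z_1 - z_2 = 2n$; in case (iii) they are the complex conjugates $m \pm \mi n$; in case (ii) there is a double root $z_1 = m$.

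Next I would substitute into \eqref{eq-funcG}. For case (i), with $f(z_1) = \me^{t(m+n)}$ and $f(z_2) = \me^{t(m-n)}$, the coefficient of $G$ is $d_1 = \frac{\me^{t(m+n)} - \me^{t(m-n)}}{2n}$, which matches the scalar prefactor in \eqref{eq-exp-G-1}. For the constant term one computes $d_0 = \frac{z_1 f(z_2) - z_2 f(z_1)}{z_1 - z_2} = \frac{(m+n)\me^{t(m-n)} - (m-n)\me^{t(m+n)}}{2n}$; I would then rewrite $d_0 I = \me^{t(m+n)} I - d_1\cdot(m+n) I$ to absorb the constant into the combination $d_1 G + d_0 I = d_1\bigl(G - (m+n)I\bigr) + \me^{t(m+n)}I$, which reproduces the bracketed matrix $\bmat{-m-n & 1 \\ n^2-m^2 & m-n}$ once one subtracts $(m+n)I$ from $G$ and uses $-(\alpha\lambda+\delta) = -(m^2 - n^2) = n^2 - m^2$ (from $z_1 z_2 = \alpha\lambda+\delta$, i.e. $m^2 - n^2 = \alpha\lambda+\delta$). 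Case (ii) is a direct substitution into \eqref{eq-funcG-repeig} with $f'(z_1) = t\me^{tm}$ and the identity $-(\alpha\lambda+\delta) = -m^2$. For case (iii) I would apply \eqref{eq-funcG} with $z_{1,2} = m \pm \mi n$, so that $\frac{f(z_1) - f(z_2)}{z_1 - z_2} = \me^{tm}\frac{\me^{\mi tn} - \me^{-\mi tn}}{2\mi n} = \frac{\me^{tm}\sin(tn)}{n}$, and similarly the constant-term manipulation gives $\me^{tm}\cos(tn)I$ after absorbing; here the entry $-(\alpha\lambda+\delta) = -(m^2+n^2)$ explains the $-n^2-m^2$ in the bottom-left of the matrix in \eqref{eq-exp-G-3}.

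The work is essentially the same algebraic reduction carried out three times, differing only in whether $n$ is real, zero, or imaginary; the one genuinely delicate point — the main obstacle, such as it is — is the bookkeeping of the constant-term coefficient $d_0$ and its repackaging so that the final answer appears in the clean ``$(\text{scalar})\cdot(\text{fixed }2\times2\text{ matrix}) + (\text{scalar})\cdot I$'' form of the stated identities, together with consistent use of the relations $z_1 + z_2 = 2m = -(\beta\lambda+\gamma)$ and $z_1 z_2 = m^2 \mp n^2 = \alpha\lambda+\delta$ to eliminate $\alpha\lambda+\delta$ in favour of $m$ and $n$. An alternative, which I would mention as a check, is to verify each formula \eqref{eq-exp-G-1}--\eqref{eq-exp-G-3} a posteriori by confirming it agrees with $I$ at $t=0$ and satisfies $\frac{\md}{\md t}\me^{tG} = G\me^{tG}$; this is a quick and independent validation that avoids re-deriving the eigenvalue formulas. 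I expect no real difficulty beyond care with signs in the discriminant cases.
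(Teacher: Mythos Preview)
Your proposal is correct and follows essentially the same route as the paper: apply Theorem~\ref{the-funcG} to obtain $\me^{tG} = d_1 G + d_0 I$, split into three cases by the sign of the discriminant, and then repackage $d_0 I$ as $(\text{scalar})I - d_1\cdot(\text{eigenvalue})I$ so that $d_1 G + d_0 I$ becomes $d_1(G - z_1 I) + (\text{scalar})I$, using $z_1+z_2 = 2m$ and $z_1 z_2 = m^2 \mp n^2$ to rewrite the entries of $G$. The only cosmetic difference is that the paper applies Theorem~\ref{the-funcG} to the matrix $tG$ with $f=\exp$, whereas you apply it to $G$ with $f(z)=\me^{tz}$; the computations are otherwise identical.
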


\begin{proof}
Let $z_1$ and $z_2$ be the two eigenvalues of the matrix $tG$. Thus $z_1$ and $z_2$ satisfy the characteristic equation $z^2 + (\beta \lambda + \gamma) t z + (\alpha \lambda + \delta) t^2 = 0$. By using formula \eqref{eq-funcG}, we obtain that 
\begin{align} \label{eq-exp-G-gen}
\me^{tG} = \frac{\me^{z_1}-\me^{z_2}}{z_1 -  z_2} tG + \frac{z_1 \me^{z_2} - z_2 \me^{z_1}}{z_1 - z_2} I.
\end{align}
The discriminant of the characteristic equation is 
\begin{align*}
D = \left((\beta \lambda + \gamma)^2 - 4(\alpha \lambda + \delta) \right) t^2.
\end{align*}
We consider three cases:
\begin{enumerate}[(i)]
\item If $D > 0$ or $(\beta \lambda + \gamma)^2 > 4(\alpha \lambda + \delta)$, the two real roots of the characteristic equation are $z_1 = tm + tn$ and $z_2 = tm - tn$, where  $m = -\frac{1}{2} (\beta \lambda + \gamma )$ and $n = \frac{1}{2} \sqrt{(\beta \lambda + \gamma)^2 - 4 (\alpha \lambda +\delta)}$. From the definitions of the parameters $m$ and $n$, we get that $-\alpha \lambda - \delta = n^2-m^2$ and $-\beta \lambda - \gamma = 2m$. We will simplify the two coefficients in formula \eqref{eq-exp-G-gen}:
\begin{align*}
\frac{\me^{z_1}-\me^{z_2}}{z_1 -  z_2} &= \frac{\me^{t(m+n)} - \me^{t(m-n)} }{2tn} , \\
\frac{z_1 \me^{z_2} - z_2 \me^{z_1}}{z_1 - z_2} &= \frac{(m+n)\me^{t(m-n)}-(m-n) \me^{t(m+n)}}{2n}  \\
&= \me^{t(m+n)}  - (m+n) \frac{\me^{t(m+n)} - \me^{t(m-n)} }{2n}.
\end{align*}
By substituting the two simplified coefficients into \eqref{eq-exp-G-gen}, we get that
\begin{align*}
\me^{tG} &= \frac{\me^{t(m+n)} - \me^{t(m-n)} }{2n} \left(G - (m+n) I \right) + \me^{t(m+n)} I \\
&= \frac{\me^{t(m+n)} - \me^{t(m-n)} }{2n} \left( 
\bmat{0 & 1 \\ n^2-m^2 & 2m} - \bmat{m+n & 0 \\ 0 & m+n} 
\right)  + \me^{t(m+n)} I \\
&= \frac{\me^{t(m+n)} - \me^{t(m-n)} }{2n} \bmat{-m-n & 1 \\ n^2-m^2 & m-n } + \me^{t(m+n)} I. 
\end{align*} 
\bigskip
\item If $D =0$ or $(\beta \lambda + \gamma)^2 = 4(\alpha \lambda + \delta)$, the characteristic equation has only one root $z_1 = tm$, where $m = -\frac{1}{2}(\beta \lambda + \gamma)$. In this case, we have 
\begin{align*}
\me^{tG} &= \me^{z_1} (tG) + \me^{z_1} (1-z_1) I  \\
&= \me^{tm} t \bmat{0 & 1 \\ -m^2 & 2m} + \me^{tm} (1-tm)\bmat{1 & 0 \\ 0 & 1} = \me^{tm} \bmat{1-tm & t \\ -tm^2 & tm+1}. 
\end{align*}
\bigskip

\item If  $D <0$ or $(\beta \lambda + \gamma)^2 < 4(\alpha \lambda + \delta)$, the characteristic equation has two conjugate complex roots $z_1 = tm +\mi tn$ and $z_2 = tm - \mi tn$, where 
\begin{align*}
 m = -\frac{1}{2} (\beta \lambda + \gamma ), \qquad n = \frac{1}{2} \sqrt{4 (\alpha \lambda + \delta) - (\beta \lambda + \gamma)^2 }. 
\end{align*}
Since $z_2 = \conj{z_1}$, we infer that $\me^{z_2} = \conj{\me^{z_1}}$ and $z_2 \me^{z_1} = \conj{z_1} \conj{\me^{z_2}} = \conj{z_1\me^{z_2}}$. We analogously simplify the two coefficients in \eqref{eq-exp-G-gen} as follows 
\begin{align*}
\frac{\me^{z_1}-\me^{z_2}}{z_1 -  z_2} &= \frac{\me^{z_1} - \overline{\me^{z_1}}}{z_1 - \overline{z_1}} = \frac{2\im(\me^{z_1})}{2\im(z_1)} =  \frac{\me^{tm} \sin (tn) }{tn} , \\
\frac{z_1 \me^{z_2} - z_2 \me^{z_1}}{z_1 - z_2} &= \frac{\im(z_1 \conj{\me^{z_1}})}{\im(z_1)} = \me^{tm} \cos (tn) - m \frac{\me^{tm}\sin (tn)}{n}  . 
\end{align*}
By substituting the two simplified coefficients into \eqref{eq-exp-G-gen} and noting that $-\alpha \lambda - \delta = -n^2-m^2$ and $-\beta \lambda - \gamma = 2m$, we get that
\begin{align*}
\me^{tG} &= \frac{\me^{tm}\sin (tn)}{n} (G- m I) + e^{tm} \cos (tn) I \\
& = \frac{\me^{tm}\sin (tn)}{n} \bmat{-m & 1 \\ -n^2 - m^2 & m} + e^{tm} \cos (tn) I. 
\end{align*} 
\end{enumerate}
This concludes the proof. 
\end{proof}

\begin{rem}
Formula \eqref{eq-exp-G-3} is useful in computations. For example, for beam equations with typical physical parameters proposed by Ito and Morris in \cite[Section 3]{ItoMor98}, the matrix $G$ has usually two complex conjugate eigenvalues.
\end{rem}

\section{Exponential integrators} \label{sec-semilinear}
\subsection{Exponential integrators for semilinear problems}
We consider semilinear differential equations of the form 
\begin{align} \label{eq-abstract-nonlinear}
\dot{y}(t) = A y(t) + F(y(t)).
\end{align}
The solution of this equation at time $t_{n+1} = t_n + \tau_n,~~~t_0 = 0, n \in \N$ is given by the variation-of-constants formula 
\begin{align*}
y(t_{n+1}) = \me^{\tau_n A} y(t_n) + \int_{0}^{\tau_n}  \me^{(\tau_n  - \tau)A} F(y(t_n + \tau)) d\tau.  
\end{align*} 
For the numerical soltuion of \eqref{eq-abstract-nonlinear}, we recall a general class of one-step exponential integrators from \cite{HochOst05a,HochOst05b,HochOst10}
\begin{align*}
y_{n+1} &= \me^{\tau_n A} y_n + \tau_n \sum_{i=1}^s b_i(\tau_n A) F_{ni}, \\
Y_{ni} &=  \me^{c_j \tau_n A} y_n + \tau_n \sum_{j=1}^{i-1} a_{ij} (\tau_n A) F_{nj},  \\
F_{nj} &= F(Y_{nj}).  
\end{align*}
The coefficients are as usually collected in a Butcher tableau
\begin{center}
\begin{tabular}{c|c c c c}
$c_1$ & \\ 
$c_2$ & $a_{21}(\tau_n A)$ \\
$\vdots$ & $\vdots$ & $\ddots$\\
$c_s$ & $a_{s1}(\tau_n A)$  & $\hdots$ & $a_{s,s-1}(\tau_n A)$        \\ [0.25em]
\hline 
\\ [-0.75em]
& $b_1(\tau_n A)$ & $\hdots$ & $b_{s-1}(\tau_n A)$ & $b_{s}(\tau_n A)$ 
\end{tabular}
\end{center}
\medskip
The method coefficients $a_{ij}$ and $b_i$ are constructed from a family of functions $\varphi_k$ evaluated at the matrix $(\tau_n A)$. We next recall this family $\varphi_k$, which was introduced before in \cite{HochOst05a,HochOst05b,HochOst10}. 
\begin{cor} Consider the entire functions
\begin{align*}
\varphi_k(z) = \int_0^1 \me^{(1-\theta)z} \frac{\theta^{k-1}}{(k-1)!} d\theta, \qquad k \ge 1, \qquad \varphi_0(z) = \me^z.
\end{align*}
These functions satisfy the following properties: 
\begin{enumerate}[(i)]
\item $\displaystyle \varphi_k(0) = \frac{1}{k!} $;
\item they satisfy the recurrence relation 
\begin{align*}
\varphi_{k+1}(z) = \frac{\varphi_{k}(z) - \varphi(0)}{z};
\end{align*}
\item the Taylor expansion of the function $\varphi_k$ is 
\begin{align*}
\varphi_k (z) = \sum_{n=0}^\infty \frac{z^n}{(n+k)!}. 
\end{align*}
\end{enumerate}
\end{cor}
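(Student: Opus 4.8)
The plan is to verify the three stated properties of the functions $\varphi_k$ directly from their integral definition. For property (i), I would simply evaluate $\varphi_k(0) = \int_0^1 \frac{\theta^{k-1}}{(k-1)!}\,d\theta = \frac{1}{k!}$, which is immediate once the case $k\ge 1$ is separated from $\varphi_0(0) = \me^0 = 1 = \frac{1}{0!}$.

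For property (ii), the recurrence relation, I would start from $\varphi_{k+1}(z) = \int_0^1 \me^{(1-\theta)z}\frac{\theta^{k}}{k!}\,d\theta$ and integrate by parts, differentiating $\theta^k/k!$ and integrating $\me^{(1-\theta)z}$ (which contributes a factor $-1/z$, hence the division by $z$). The boundary term at $\theta=1$ produces $\me^0\cdot\frac{1}{k!}\cdot\frac{1}{z} = \frac{\varphi_k(0)}{z}$ (note the statement's $\varphi(0)$ should read $\varphi_k(0)$), the boundary term at $\theta = 0$ vanishes since $\theta^k$ has a zero there for $k\ge 1$, and the remaining integral reassembles into $\frac{1}{z}\varphi_k(z)$. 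One should also record the base case $\varphi_1(z) = \int_0^1 \me^{(1-\theta)z}\,d\theta = \frac{\me^z - 1}{z}$ to confirm consistency with $\varphi_0 = \me^z$.

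For property (iii), the Taylor expansion, I would expand $\me^{(1-\theta)z} = \sum_{n=0}^\infty \frac{(1-\theta)^n z^n}{n!}$, interchange sum and integral (justified since the series converges uniformly on $[0,1]$ for fixed $z$), and use the Beta-integral identity $\int_0^1 (1-\theta)^n \theta^{k-1}\,d\theta = \frac{n!\,(k-1)!}{(n+k)!}$. This collapses the general term to $\frac{z^n}{(n+k)!}$, giving the claimed series; alternatively one can derive it inductively from (ii) starting from the expansion of $\varphi_0 = \me^z$.

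None of the three steps presents a genuine obstacle — this is a routine verification. The only point requiring mild care is the integration by parts in (ii): one must observe that the vanishing of the lower boundary term requires $k\ge 1$, and separately check that the recurrence with $k=0$ still yields $\varphi_1(z) = (\me^z-1)/z$ consistently with the convention $\varphi_0(z) = \me^z$. I would also note that $z$ appears in denominators only as a removable singularity, so all formulas extend to $z=0$ by continuity, matching property (i).
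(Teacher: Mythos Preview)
Your proposal is correct and constitutes a standard, complete verification of the three properties. Note, however, that the paper does not actually prove this corollary: it is introduced with the phrase ``we recall this family $\varphi_k$, which was introduced before in \ldots'' and the properties are simply stated as known facts with references to the literature. So there is no proof in the paper to compare yours against; your argument is exactly the routine check one would supply if asked to fill in the details.

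One small slip worth tightening when you write it out: in your sketch of the integration by parts for (ii), the boundary contribution at $\theta=1$ carries a minus sign (since the antiderivative of $\me^{(1-\theta)z}$ in $\theta$ is $-\tfrac{1}{z}\me^{(1-\theta)z}$), so it is $-\tfrac{1}{z}\cdot\tfrac{1}{k!}=-\tfrac{\varphi_k(0)}{z}$, which then combines with $+\tfrac{1}{z}\varphi_k(z)$ from the remaining integral to give the stated recurrence. You clearly have the right target formula, so this would sort itself out immediately upon writing the computation carefully.
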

 \noindent To simplify notation, we denote 
\begin{align*}
\varphi_{k,j} = \varphi_k( c_j \tau_n A), \quad \varphi_k = \varphi_k(\tau_n A). 
\end{align*} 
Next, we recall five exponential integrators that will be used in our numerical examples. 
\begin{ex} For $s=1$, the exponential Euler method has the form 
\begin{align} \label{eq-exp-Euler}
y_{n+1} = \me^{\tau_n A} y_n + \tau_n \varphi_1 (\tau_n A) F(y_n).
\end{align} 
\begin{center} \ttfamily{(EI-E1)} \qquad 
\begin{tabular}{c|c}
$0$ & \\ 
\hline 
& $\varphi_1$ 
\end{tabular}
\end{center} 
We denote this method by \ttfamily{EI-E1}.
\end{ex}

\begin{ex} For $s=2$, we recall a second-order method proposed by Strehmel and Weiner in \cite[Section 4.5.3]{StrehWei92}: 
\begin{center} \ttfamily{(EI-SW21)} \qquad 
\begin{tabular}{c|c c }
$0$ & \\
$c_2$ & $c_2 \varphi_{1,2}$ \\ [0.25em]
\hline 
\\ [-0.75em]
& $\varphi_1 - \frac{1}{c_2} \varphi_2$ & $\frac{1}{c_2} \varphi_2$  
\end{tabular}
\end{center} 
A simplified version, where only $\varphi_1$ is used, is also proposed by Strehmel and Weiner
\begin{center} \ttfamily{(EI-SW22)} \qquad 
\begin{tabular}{c|c c }
$0$ & \\
$c_2$ & $c_2 \varphi_{1,2}$ \\ [0.25em]
\hline 
\\ [-0.75em]
& $\left( 1 - \frac{1}{2c_2} \right) \varphi_1$ & ${\frac{1}{2c_2}} \varphi_1$  
\end{tabular}
\end{center} 
\end{ex}

\begin{ex} For $s=4$, we recall two schemes. The first one, proposed by Krogstad in \cite{Krog05}, is given by 
\begin{center} \ttfamily{(EI-K4)} \qquad 
\begin{tabular}{c|c c c c }
$0$ & \\ 
$\frac{1}{2}$ & $\frac{1}{2} \varphi_{1,2}$  \\ [0.5em]
$\frac{1}{2}$ & $\frac{1}{2} \varphi_{1,3} - \varphi_{2,3} $ & $\varphi_{2,3} $ \\ [0.5em]
$1$ & $\varphi_{1,4} - 2\varphi_{2,4} $ & $0$ & $2\varphi_{2,4} $ \\ [0.25em]
\hline 
\\ [-0.75em]
&  $\varphi_1 - 3 \varphi_2 + 4\varphi_3 $ & $2 \varphi_2 - 4\varphi_3$ &  $2 \varphi_2 - 4\varphi_3$  &  $-\varphi_2 + 4\varphi_3$ 
\end{tabular}
\end{center} 
The second method is suggested by Strehmel and Weiner (see \cite[Example 4.5.5]{StrehWei92}) 
\begin{center} \ttfamily{(EI-SW4)} \qquad 
\begin{tabular}{c|c c c c }
$0$ & \\ 
$\frac{1}{2}$ & $\frac{1}{2} \varphi_{1,2}$  \\ [0.5em]
$\frac{1}{2}$ & $\frac{1}{2} \varphi_{1,3} - \frac{1}{2} \varphi_{2,3} $ & $\frac{1}{2} \varphi_{2,3} $ \\ [0.5em]
$1$ & $\varphi_{1,4} - 2\varphi_{2,4} $ & $- 2\varphi_{2,4}$ & $4\varphi_{2,4} $ \\  [0.25em]
\hline 
\\ [-0.75em]
&  $\varphi_1 - 3 \varphi_2 + 4\varphi_3 $ & $0$ &  $4 \varphi_2 - 8\varphi_3$  &  $-\varphi_2 + 4\varphi_3$ 
\end{tabular}
\end{center} 
\end{ex}

\bigskip

\subsection{Computing matrix functions of $tA$}
To apply these exponential integrators to semilinear problems, we next introduce an approach to explicitly compute the matrix functions $\varphi_k(tA)$. We first present an analogous version of Theorem \ref{the-exp}.
\begin{theorem} \label{the-phik}
Assume that there exist an orthogonal matrix $Q$ and a diagonal matrix $D = \diag\{\lambda_1, \cdots, \lambda_n\}$ such that $S = Q D Q'$.
Then, for $t >0$ and $k \ge 1$, the functions $\varphi_k(tA)$ are computed as follows 
\begin{align} \label{eq-varphi-A}
\varphi_k(tA) = \bmat{Q & 0 \\ 0 & Q} P 
 \bmat {
\varphi_k(tG_1) & 0 & \cdots & 0 \\
0 & \varphi_k(tG_2)& \cdots & 0 \\
\vdots & \vdots & \ddots & 0 \\
0 & 0 & \cdots &\varphi_k(tG_n) 
} P' \bmat{Q' & 0 \\ 0 & Q'}, 
\end{align}
where $P \in \R^{2n \times 2n}$ is given in \eqref{eq-matP} and $G_i = \bmat{0 & 1  \\ -\alpha \lambda_i - \delta & -\beta \lambda_i - \gamma}$. 
\end{theorem}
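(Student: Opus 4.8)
The plan is to mimic exactly the proof of Theorem \ref{the-exp}, replacing the exponential function by $\varphi_k$ throughout. First I would recall from Lemmas \ref{lem-trans-Q} and \ref{lem-trans-P} the factorization
\begin{align*}
tA = \bmat{Q & 0 \\ 0 & Q} P
 \bmat {
tG_1 & 0 & \cdots & 0 \\
0 & tG_2 & \cdots & 0 \\
\vdots & \vdots & \ddots & 0 \\
0 & 0 & \cdots & tG_n
} P' \bmat{Q' & 0 \\ 0 & Q'},
\end{align*}
which holds because scalar multiplication by $t$ commutes with all the transformations involved. Writing $R = \bmat{Q & 0 \\ 0 & Q} P$, note that $R$ is a product of orthogonal matrices (the block-diagonal matrix built from $Q$ is orthogonal since $Q$ is, and the permutation matrix $P$ is orthogonal), hence $R$ is invertible with $R^{-1} = R' = P' \bmat{Q' & 0 \\ 0 & Q'}$. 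Thus $tA = R \, \diag(tG_1,\dots,tG_n) \, R^{-1}$.

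The key step is then to apply the structural properties of matrix functions collected in Theorem \ref{the-propmatrixcomputations}. Since each $\varphi_k$ is an entire function (as recalled in the corollary following the Butcher tableau discussion), it is in particular defined on the spectrum of $tA$, so $\varphi_k(tA)$ is well defined. By property (c) of Theorem \ref{the-propmatrixcomputations} applied with $X = R$,
\begin{align*}
\varphi_k(tA) = \varphi_k\bigl(R \, \diag(tG_1,\dots,tG_n) \, R^{-1}\bigr) = R \, \varphi_k\bigl(\diag(tG_1,\dots,tG_n)\bigr) \, R^{-1}.
\end{align*}
Then property (f) of the same theorem gives $\varphi_k(\diag(tG_1,\dots,tG_n)) = \diag(\varphi_k(tG_1),\dots,\varphi_k(tG_n))$. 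Substituting back the expressions for $R$ and $R^{-1} = R'$ yields precisely formula \eqref{eq-varphi-A}, and the proof is complete.

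I do not expect any serious obstacle here: the argument is a verbatim copy of the proof of Theorem \ref{the-exp} with $\me^{t(\cdot)}$ replaced by $\varphi_k(t\cdot)$, and the only facts used — that $\varphi_k$ is entire (hence defined on the spectrum of $tA$) and that $R$ is orthogonal — are already available. The one small point worth stating explicitly in the write-up is why $\varphi_k$ is defined on $\mathrm{spec}(tA)$: this is immediate because the $\varphi_k$ are entire functions, so no assumption on the eigenvalues of $A$ (or on diagonalizability of $A$ itself) is needed beyond the diagonalizability of $S$ that is hypothesized. If one wanted, one could alternatively invoke properties (a)/(b)/(f) to note that the block structure is preserved under the permutation conjugation as in Lemma \ref{lem-trans-P}, but the cleanest route is the two-line application of (c) and (f) above.
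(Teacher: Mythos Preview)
Your proposal is correct and follows exactly the approach the paper intends: the paper does not even spell out a proof for Theorem~\ref{the-phik}, presenting it simply as ``an analogous version of Theorem~\ref{the-exp}'', and your argument via properties (c) and (f) of Theorem~\ref{the-propmatrixcomputations} is precisely the proof of Theorem~\ref{the-exp} with $\me^{(\cdot)}$ replaced by $\varphi_k$.
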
 
The matrix functions $\varphi_k(tG_i)$ are computed explicitly. The actual formula depends on the sign of $(\beta \lambda_i + \gamma)^2-4(\alpha \lambda_i + \delta)$. Next, we will present two lemmas concerning these functions.      
\begin{lem} Assume that the matrix $G$ is of the form 
\begin{align*}
G =  \bmat{0 & 1 \\ -\alpha \lambda - \delta & -\beta \lambda - \gamma}
\end{align*}
and denote $m = -\frac{1}{2} (\beta \lambda + \gamma )$. 
\begin{enumerate}[(i)]
\item If $(\beta \lambda +  \gamma)^2 > 4 (\alpha \lambda + \delta)$, denoting $n = \frac{1}{2} \sqrt{(\beta \lambda + \gamma)^2-4 (\alpha \lambda+ \delta)  } $, the matrix functions $\varphi_k(tG)$ can be computed explicitly as follows 
\begin{align} \label{eq-varphi-G-1}
\varphi_k(tG) = \frac{\varphi_k^+ -\varphi_k^-}{2n}\bmat{
-m-n & 1 \\ n^2 - m^2 & m-n 
} + \varphi_k^+ I,
\end{align}
where $\varphi_k^+ = \varphi_k(t(m+n))$ and $\varphi_k^- = \varphi_k(t(m-n))$.
\item If $(\beta \lambda +  \gamma)^2 = 4 (\alpha \lambda + \delta)$, we obtain that 
\begin{align} \label{eq-varphi-G-2}
\varphi_k(tG) = \varphi'_k(tm) \bmat{-tm & t \\ -tm^2 & tm} + \varphi_k(tm) I, 
\end{align}
where the derivative $\varphi^\prime_k(z)$ can be computed recursively 
\begin{align*}
\varphi_0(z) &= \varphi_0^\prime(z) = \me^z, \\
\varphi^\prime_{k+1}(z) &=  \frac{\varphi_k'(z) - \varphi_{k+1}(z)}{z}, 
\quad \varphi_{k+1}(z) = \frac{\varphi_k(z) - \varphi_{k}(0)}{z}.
\end{align*}
\end{enumerate}
\end{lem}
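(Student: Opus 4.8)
The plan is to follow the same route as in the proof of Lemma~\ref{lem-exp-G}, simply replacing the exponential by $\varphi_k$ and invoking the $2\times 2$ matrix-function representation of Theorem~\ref{the-funcG}. First I would record that the eigenvalues $z_1,z_2$ of $tG$ solve the characteristic equation $z^2 + (\beta\lambda+\gamma)tz + (\alpha\lambda+\delta)t^2 = 0$, whose discriminant is $D = \bigl((\beta\lambda+\gamma)^2 - 4(\alpha\lambda+\delta)\bigr)t^2$; the sign of $D$ equals the sign of $(\beta\lambda+\gamma)^2 - 4(\alpha\lambda+\delta)$, which is precisely what separates cases (i) and (ii). Throughout I would use $-\beta\lambda-\gamma = 2m$ together with $-\alpha\lambda-\delta = n^2-m^2$ in case (i) and $-\alpha\lambda-\delta = -m^2$ in case (ii), which let one rewrite $G$, hence $tG$, purely in terms of $m$ and $n$.

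For case (i) ($D>0$) the eigenvalues $z_1 = t(m+n)$ and $z_2 = t(m-n)$ are real and distinct, so \eqref{eq-funcG} applies with $f=\varphi_k$:
\begin{align*}
\varphi_k(tG) = \frac{\varphi_k(z_1) - \varphi_k(z_2)}{z_1-z_2}\,(tG) + \frac{z_1\varphi_k(z_2) - z_2\varphi_k(z_1)}{z_1-z_2}\,I .
\end{align*}
Since $z_1 - z_2 = 2tn$, the first divided difference becomes $\tfrac{\varphi_k^+ - \varphi_k^-}{2n}$ after the factor $t$ is absorbed into $tG$. For the second coefficient I would use the elementary identity $\tfrac{z_1 f(z_2) - z_2 f(z_1)}{z_1-z_2} = f(z_1) - z_1\tfrac{f(z_1)-f(z_2)}{z_1-z_2}$, which turns it into $\varphi_k^+ - (m+n)\tfrac{\varphi_k^+ - \varphi_k^-}{2n}$. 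Collecting terms gives $\varphi_k(tG) = \tfrac{\varphi_k^+-\varphi_k^-}{2n}\bigl(G - (m+n)I\bigr) + \varphi_k^+ I$, and substituting $G - (m+n)I = \bmat{-m-n & 1 \\ n^2-m^2 & m-n}$ yields \eqref{eq-varphi-G-1}.

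For case (ii) ($D=0$) the matrix $tG$ has the single eigenvalue $z_1 = tm$ and $\alpha\lambda+\delta = m^2$, so $tG = \bmat{0 & t \\ -tm^2 & 2tm}$. Now \eqref{eq-funcG-repeig} gives $\varphi_k(tG) = \varphi_k'(tm)\,(tG) + \bigl(\varphi_k(tm) - tm\,\varphi_k'(tm)\bigr)I$; writing this out and regrouping produces the matrix $\varphi_k'(tm)\bmat{-tm & t \\ -tm^2 & tm} + \varphi_k(tm)I$ of \eqref{eq-varphi-G-2}. It remains to justify the recursion for $\varphi_k'$: starting from $z\varphi_{k+1}(z) = \varphi_k(z) - \varphi_k(0)$ (property (ii) of the $\varphi_k$), differentiating gives $\varphi_{k+1}(z) + z\varphi_{k+1}'(z) = \varphi_k'(z)$, i.e. $\varphi_{k+1}'(z) = \tfrac{\varphi_k'(z) - \varphi_{k+1}(z)}{z}$, with base case $\varphi_0 = \varphi_0' = \me^z$.

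I do not expect a genuine obstacle: the argument is a mechanical transcription of the exponential case, and the only points needing a little care are the two algebraic simplifications (the identity for $\tfrac{z_1 f(z_2) - z_2 f(z_1)}{z_1-z_2}$ and the regrouping in case (ii)) and verifying that the derivative recursion is well defined. The one caveat worth flagging, though it lies outside the proof proper, is numerical: the divided difference $\tfrac{\varphi_k^+ - \varphi_k^-}{2n}$ and the quotients by $z$ suffer cancellation when $n$ or $tm$ is small, so an implementation would switch to the Taylor series $\varphi_k(z) = \sum_{j\ge 0} z^j/(j+k)!$ in that regime.
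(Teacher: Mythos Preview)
Your proposal is correct and follows exactly the route the paper itself takes: the paper's proof consists of the single sentence ``By applying Theorem~\ref{the-funcG}, the proof follows the lines of the first two points in the proof of Lemma~\ref{lem-exp-G},'' and you have faithfully carried out that transcription. Your added justification of the recursion for $\varphi_k'$ (by differentiating $z\varphi_{k+1}(z)=\varphi_k(z)-\varphi_k(0)$) fills in a detail the paper leaves implicit, and your numerical caveat about cancellation is a sensible aside but, as you note, outside the proof proper.
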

\begin{proof}
By applying Theorem \ref{the-funcG}, the proof follows the lines of the first two points in the proof of Lemma \ref{lem-exp-G}. 
\end{proof}
The last lemma concentrates on the case of two complex eigenvalues of $G_i$. Again the idea is to compute the matrix functions without explicitly using complex numbers. It is inspired by formula \eqref{eq-exp-G-3} above. 
\begin{lem}
In the case $(\beta \lambda +  \gamma)^2 < 4 (\alpha \lambda + \delta)$, the matrix $G$ has two conjugate complex eigenvalues $z_1$ und $z_2$ with $z_1 = tm + i tn$, where 
\begin{align*}
 m = -\frac{1}{2} (\beta \lambda + \gamma ), \qquad n = \frac{1}{2} \sqrt{4 (\alpha \lambda + \delta) - (\beta \lambda + \gamma)^2 }.
\end{align*}
The matrix $\varphi_k(tG)$ can be explicitly computed as follows
\begin{align}\label{eq-varphi-G-3}
\varphi_k(tG) = \frac{i_k}{n} \bmat{-m & 1 \\ -n^2 - m^2 & m} + r_k I. 
\end{align}
Here, the two coefficients $i_k$ and $r_k$ depend on $(t,m,n)$ and can be computed recursively as follows 
\begin{subequations} 
\begin{align}
i_0 &=\me^{tm} \sin(tn), \quad r_0 =  \me^{tm} \cos (tn), \\
i_{k} &= \frac{1}{t(m^2 + n^2)} \left( m i_{k-1} - n  \left(r_{k-1}- \frac{1}{(k-1)!} \right) \right), \\
r_{k} &=  \frac{1}{t(m^2 + n^2)} \left( n i_{k-1} + m \left(r_{k-1}- \frac{1}{(k-1)!} \right) \right).
\end{align}
\end{subequations} 
\end{lem}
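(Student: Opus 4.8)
The plan is to apply Theorem~\ref{the-funcG} to the entire function $f=\varphi_k$, just as was done for the exponential in the third case of the proof of Lemma~\ref{lem-exp-G}. Since $(\beta\lambda+\gamma)^2<4(\alpha\lambda+\delta)$ strictly, the eigenvalues $z_1=t(m+\mi n)$ and $z_2=\conj{z_1}$ of $tG$ are distinct, so \eqref{eq-funcG} applies. Because $\varphi_k$ has real Taylor coefficients (property (iii) of the corollary), $\varphi_k(\conj{z_1})=\conj{\varphi_k(z_1)}$, hence, writing $r_k=\re\varphi_k(z_1)$ and $i_k=\im\varphi_k(z_1)$, formula \eqref{eq-funcG} reads
\begin{align*}
\varphi_k(tG)=\frac{\varphi_k(z_1)-\conj{\varphi_k(z_1)}}{z_1-\conj{z_1}}\,tG+\frac{z_1\conj{\varphi_k(z_1)}-\conj{z_1}\varphi_k(z_1)}{z_1-\conj{z_1}}\,I
=\frac{i_k}{tn}\,tG+\frac{\im\!\big(z_1\conj{\varphi_k(z_1)}\big)}{tn}\,I.
\end{align*}
Expanding $z_1\conj{\varphi_k(z_1)}=t(m+\mi n)(r_k-\mi i_k)=t\big((mr_k+ni_k)+\mi(nr_k-mi_k)\big)$ shows that the second coefficient equals $r_k-\tfrac{m}{n}i_k$, so $\varphi_k(tG)=\tfrac{i_k}{n}(G-mI)+r_kI$.

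Next I would rewrite $G-mI$ in the claimed matrix form. From $m=-\tfrac12(\beta\lambda+\gamma)$ we get $-\beta\lambda-\gamma=2m$, so the $(2,2)$ entry of $G-mI$ is $m$; and $m^2+n^2=\tfrac14(\beta\lambda+\gamma)^2+\tfrac14\big(4(\alpha\lambda+\delta)-(\beta\lambda+\gamma)^2\big)=\alpha\lambda+\delta$, so the $(2,1)$ entry is $-\alpha\lambda-\delta=-n^2-m^2$. This yields \eqref{eq-varphi-G-3}. The base case $k=0$ is immediate: $\varphi_0(z_1)=\me^{z_1}=\me^{tm}(\cos tn+\mi\sin tn)$, whence $r_0=\me^{tm}\cos(tn)$ and $i_0=\me^{tm}\sin(tn)$.

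Finally, for the recursion I would use property (ii) of the corollary, $\varphi_k(z)=\big(\varphi_{k-1}(z)-\varphi_{k-1}(0)\big)/z$, together with $\varphi_{k-1}(0)=1/(k-1)!$ from property (i). Evaluating at $z=z_1$ and multiplying numerator and denominator by $\conj{z_1}=t(m-\mi n)$, so that the denominator becomes the real number $t^2(m^2+n^2)$, gives
\begin{align*}
r_k+\mi\,i_k=\frac{1}{t(m^2+n^2)}\Big(\big(r_{k-1}-\tfrac{1}{(k-1)!}\big)+\mi\,i_{k-1}\Big)(m-\mi n),
\end{align*}
and comparing real and imaginary parts produces exactly the stated recursions for $r_k$ and $i_k$. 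I do not expect a genuine obstacle; the only points needing care are the sign bookkeeping in the second coefficient of \eqref{eq-funcG} and fixing the branch $z_1=t(m+\mi n)$ (rather than $\conj{z_1}$) once and for all when defining $r_k,i_k$, so that the sign of $n$ in the recursion comes out correctly.
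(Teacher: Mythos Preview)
Your proposal is correct and follows essentially the same route as the paper: apply \eqref{eq-funcG} with $f=\varphi_k$, exploit $\varphi_k(\conj{z_1})=\conj{\varphi_k(z_1)}$ to identify the two coefficients as $\tfrac{i_k}{tn}$ and $r_k-\tfrac{m}{n}i_k$, rewrite $G-mI$ using $-\alpha\lambda-\delta=-(m^2+n^2)$ and $-\beta\lambda-\gamma=2m$, and derive the recursion by multiplying $\varphi_k(z_1)=\big(\varphi_{k-1}(z_1)-\tfrac{1}{(k-1)!}\big)/z_1$ through by $\conj{z_1}/|z_1|^2$ and separating real and imaginary parts. The paper's proof differs only in presentation order (it establishes the recursion before assembling the final matrix expression).
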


\begin{proof}
By using formula \eqref{eq-funcG}, we obtain that
\begin{align*}
\varphi_k(tG) = \frac{\varphi_k(z_1)-\varphi_k(z_2) }{z_1 - z_2} tG +  \frac{z_1\varphi_k(z_2)-z_2\varphi_k(z_1) }{z_1 - z_2} I. 
\end{align*}
First, we note that $\varphi_{k}(z_2) = \conj{\varphi_{k}(z_1)}$ because $\varphi_k$ has real coefficients.
Thus we can simplify as follows 
\begin{align*}
 \frac{\varphi_k(z_1)-\varphi_k(z_2) }{z_1 - z_2}  = \frac{\im(\varphi_k(z_1))}{\im z_1}, \quad \frac{z_1\varphi_k(z_2)-z_2\varphi_k(z_1) }{z_1 - z_2} = \frac{\im(z_1\conj{\varphi_k(z_1)})}{\im z_1}. 
\end{align*}
Next, we rewrite the recursion as follows
\begin{align*}
\varphi_{k+1}(z_1) = \frac{\varphi_k(z_1)- \varphi_k(0)}{z_1} = \frac{(\varphi_k(z_1)- \varphi_k(0)) \conj{z_1}}{|z_1|^2}.
\end{align*} 
To simplify notation, we denote $i_{k} = \im(\varphi_{k} (z_1))$ and  $r_{k} = \re(\varphi_{k} (z_1))$. Thus we obtain that 
\begin{align*}
i_{k+1} &= \im (\varphi_{k+1}(z_1))  = \frac{1}{|z_1|^2} \im \Big((\varphi_k(z_1) - \varphi_k(0)) \conj{z_1} \Big) \\
&= \frac{1}{t^2(m^2+n^2)}\Big( \im (\varphi_k(z_1) - \varphi_k(0)) \re(\conj{z_1}) +  \re (\varphi_k(z_1) - \varphi_k(0)) \im(\conj{z_1}) \Big)\\
&= \frac{1}{t(m^2+n^2)}\left( m i_k  - n \left(r_k - \frac{1}{k!}\right) \right), \\
r_{k+1} &= \re (\varphi_{k+1}(z_1)) = \frac{1}{|z_1|^2} \re \Big((\varphi_k(z_1) - \varphi_k(0)) \conj{z_1} \Big) \\
&=\frac{1}{t^2(m^2+n^2)} \Big( \re (\varphi_k(z_1) - \varphi_k(0)) \re(\conj{z_1}) -  \im (\varphi_k(z_1) - \varphi_k(0)) \im(\conj{z_1}) \Big) \\
&= \frac{1}{t(m^2+n^2)} \left( n i_k + m \left( r_k - \frac{1}{k!} \right) \right). 
\end{align*}
Besides, we also get that 
$
\im(z_1 \conj{\varphi_{k}(z_1)}) = tn r_k - tm i_k 
$. This finally yields that 
\begin{align*}
\varphi_k(tG) &= \frac{i_k}{n} G +  \frac{nr_k - mi_k}{n} I = \frac{i_k}{n} (G-m I) + r_k I \\
&= \frac{i_k}{n} \bmat{-m & 1 \\ -n^2 - m^2 & m} + r_k I,
\end{align*}
which completes the proof. 
\end{proof}
\subsection{Summary of the integration procedure} \label{sec-proc}
The above described procedure can be summarized in two main parts. The \textbf{P}repartion part, which is done once at the beginning, consists of three steps: 
\begin{enumerate}[\bfseries P1:]
\item Discretize the operator $(-\Delta)$ or $\pdxxxx$ as a square symmetric matrix $S$ (e.g., by finite differences, see also section \ref{sec-numex}).
\item Find an orthogonal matrix $Q$ and a diagonal matrix $D = \diag\{\lambda_1, \dots, \lambda_n \}$ such that $S = Q D Q'$. The matrix $D$ is stored as a vector. 
\item Create a column vector which stores the positions of all non-zero entries of the permutation matrix $P$ by using formula \eqref{eq-matP}.
\end{enumerate}
\medskip
The \textbf{M}ain part is used to compute the action of a matrix functions. This is required in the time stepping. Computing this action consists of two steps: 
\begin{enumerate}[\bfseries M1:]
\item Compute the matrix functions $\varphi_k(tG_i)$ by using formulas \eqref{eq-exp-G-1}, \eqref{eq-exp-G-2}, or \eqref{eq-exp-G-3} for $\varphi_0(tG_i) = \me^{tG_i}$; formulas \eqref{eq-varphi-G-1}, \eqref{eq-varphi-G-2}, or \eqref{eq-varphi-G-3} for $\varphi_k(tG_i)$ with $k \ge 1$.  
\item Compute the action of the matrix functions $\varphi_k(tA)$ using formula \eqref{eq-exp-A} for $k = 0$ and formula \eqref{eq-varphi-A} for $k \ge 1$.      
\end{enumerate}

\section{Numerical examples} \label{sec-numex}
\subsection{Semilinear wave equations}
We consider a 1D semilinear wave equation on $\Omega = (0,\ell)$
\begin{subequations}  \label{eq-example-wave}
\begin{align}
\pdtt u &- \alpha \pdxx u - \beta \pdxxt u  + \delta u + \gamma \pdt u = {g(u) + h(\pdt u)},  \quad 0 < x < \ell,~~t \in (0,T], \\
u(t,0) &= 0, \quad u(t,\ell) = 0, \\
u(0,x) &= p(x), \quad u_t(0,x) = q(x). 
\end{align}
\end{subequations}
We consider the product space $X = H^1_0(\Omega) \times L^2(\Omega)$ and rewrite \eqref{eq-example-wave} in abstract form 
\begin{align*}
\dot{y}(t) = \cA y(t)+ \cF(y(t)), \qquad y(0)= (p, q)', 
\end{align*}
where $\cA: D(\cA) \to X$ is the operator defined by
\begin{align*}
\cA \bmat{u \\ w} &= \bmat{0 & I \\ -\alpha (-\pdxx) -  \delta I & - \beta (-\pdxx)- \gamma I }\bmat{u \\ w} \\
&= \bmat{w \\  -\alpha (-\pdxx u)  - \beta (-\pdxx w) - \delta u- \gamma w }, \\
D(\cA) &= \left( H^2(\Omega) \cap H_0^1(\Omega) \right)^2 , \\
\cF \pmat{u \\ w} &= \bmat{0 \\ {g(u) + h(w)}}.
\end{align*}
Define the closed self-adjoint positive operator $\cA_0$ on $L^2(0,\ell)$ by
\begin{align*}
\cA_0 \phi &= -\pdxx \phi, \\
D(\cA_0) &= \{ \phi \in H^2(0,\ell) \mid \phi(0) = \phi(1) =0 \}.
\end{align*}
We use symmetric finite differences to discretize the operator $\cA_0$. For this, the space interval $(0,\ell)$ is divided equidistantly by the nodes $x_i = i \Delta x,~~i \in \{0, \dots, N+1 \}$, where $N$ is a given integer and $\Delta x = \frac{\ell}{N+1}$. Then, the discrete operator $\cA_0$ is given by the matrix $S_w \in \R^{N \times N}$ defined by 
\begin{align} \label{eq-discrete-Sw}
S_w = \frac{1}{\Delta x^2}\bmat{
2 & -1 & 0 & \ldots & 0 \\
-1 & 2 & -1 & \ddots & \vdots \\
0 & \ddots & \ddots &  \ddots & 0 \\
\vdots & \ddots & -1 & 2 & -1 \\
0 & \ldots  & 0 & -1 & 2 
}. 
\end{align}
In the four examples below, we consider the space interval $\Omega =(0,1)$ with $N = 200$. 
\begin{ex}\label{ex-wave1}
Consider equation \eqref{eq-example-wave} with $\alpha =\pi^2,~\beta = 10^{-2},~\delta = 0,~\gamma = 10^{-2}$. The nonlinear source term is $g(u) = \sin u$. This is a perturbed sine-Gordon equation of the form \eqref{eq-wave-sineGordon}. The initial conditions are $p(x) = 5 \sin(2\pi x)$ and $q(x) = 0$. We use {four} different schemes, namely  {\ttfamily{EI-E1}}, {\ttfamily{EI-SW21}} ({with $c_2 = 0.75$}), {\ttfamily{EI-SW4}}, and {\ttfamily{EI-K4}} to compute the solution at time $T = 6$ with $M \in \{5, 10, 20, \cdots, 5 \cdot 2^{11}\}$ time steps. {The reference solution $y_{ref} = (u_{ref}, (\pdt u)_{ref})'$ plotted in Figures \ref{fig-ex1-refsol-u} and \ref{fig-ex1-refsol-ut} is computed by using {\ttfamily{EI-SW4}} with $M = 200000$ time steps. The discrete $\ell_2$ error between the approximate solution obtained with the mentioned integrators at the final time $y(T) = (u(T), \pdt u(T))'$ and the reference solution $y_{ref}$ is computed by the formula
\begin{align} \label{eq-dis-L2-err}
\| y(T) - y_{ref} \|^2_{\ell_2} = \Delta x \sum_{i=1}^N |y_i(T) -y_{ref,i}|^2. 
\end{align}}
These errors are plotted in Figure \ref{fig-ex1-rate}. The expected convergence rate is observed for each scheme. Even when we use a rather coarse time mesh with $M = 5$ and $\Delta t= 1.2$, the error is quite small (approximate $10^{-1}$). 
\begin{figure}[h]
\centering 
\begin{subfigure}[]{0.495\textwidth}
         \centering
         \includegraphics[width=\textwidth]{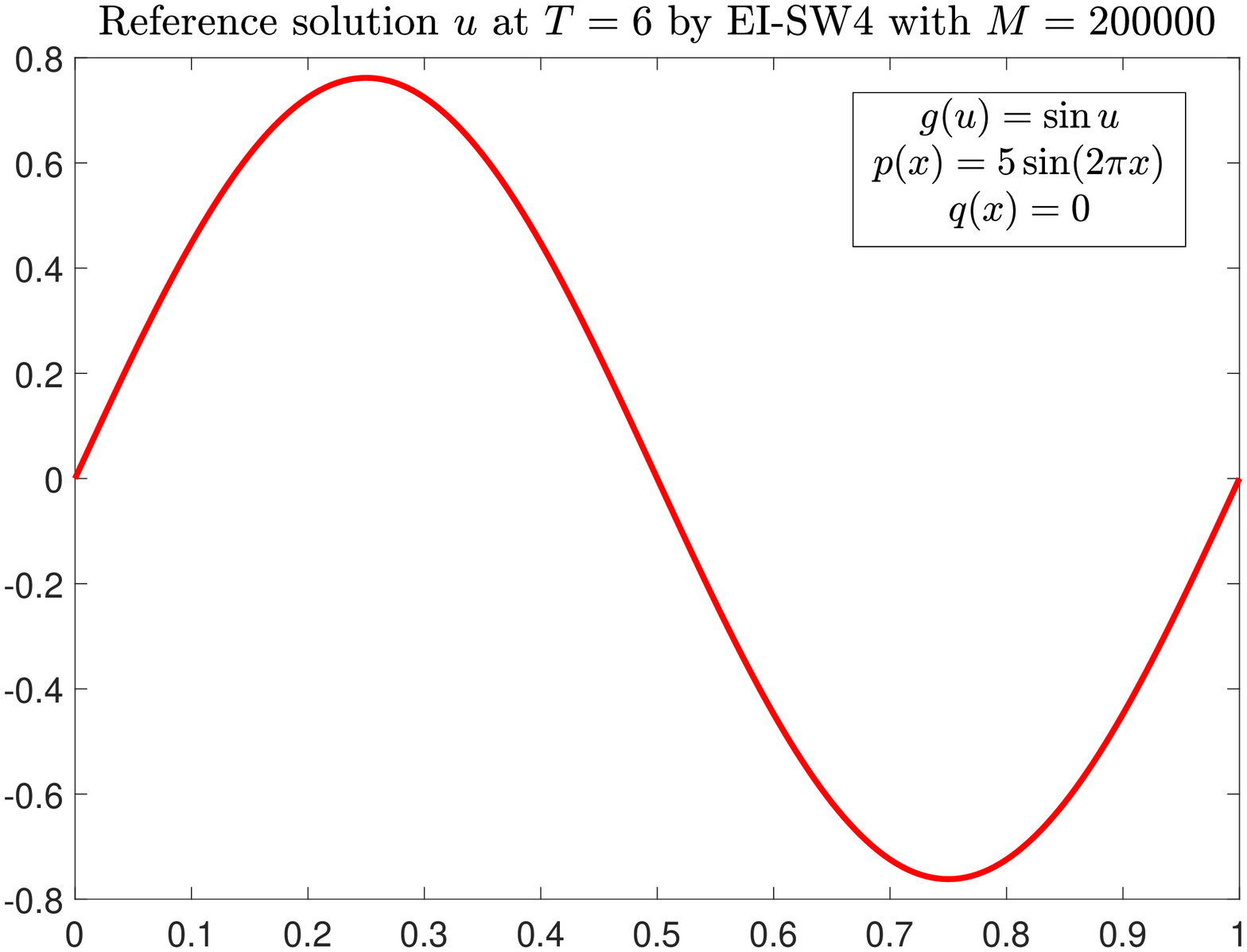}
         \caption{Reference solution $u$ at $T = 6$.}
         \label{fig-ex1-refsol-u}
\end{subfigure}
\begin{subfigure}[]{0.495\textwidth}
         \centering
         \includegraphics[width=\textwidth]{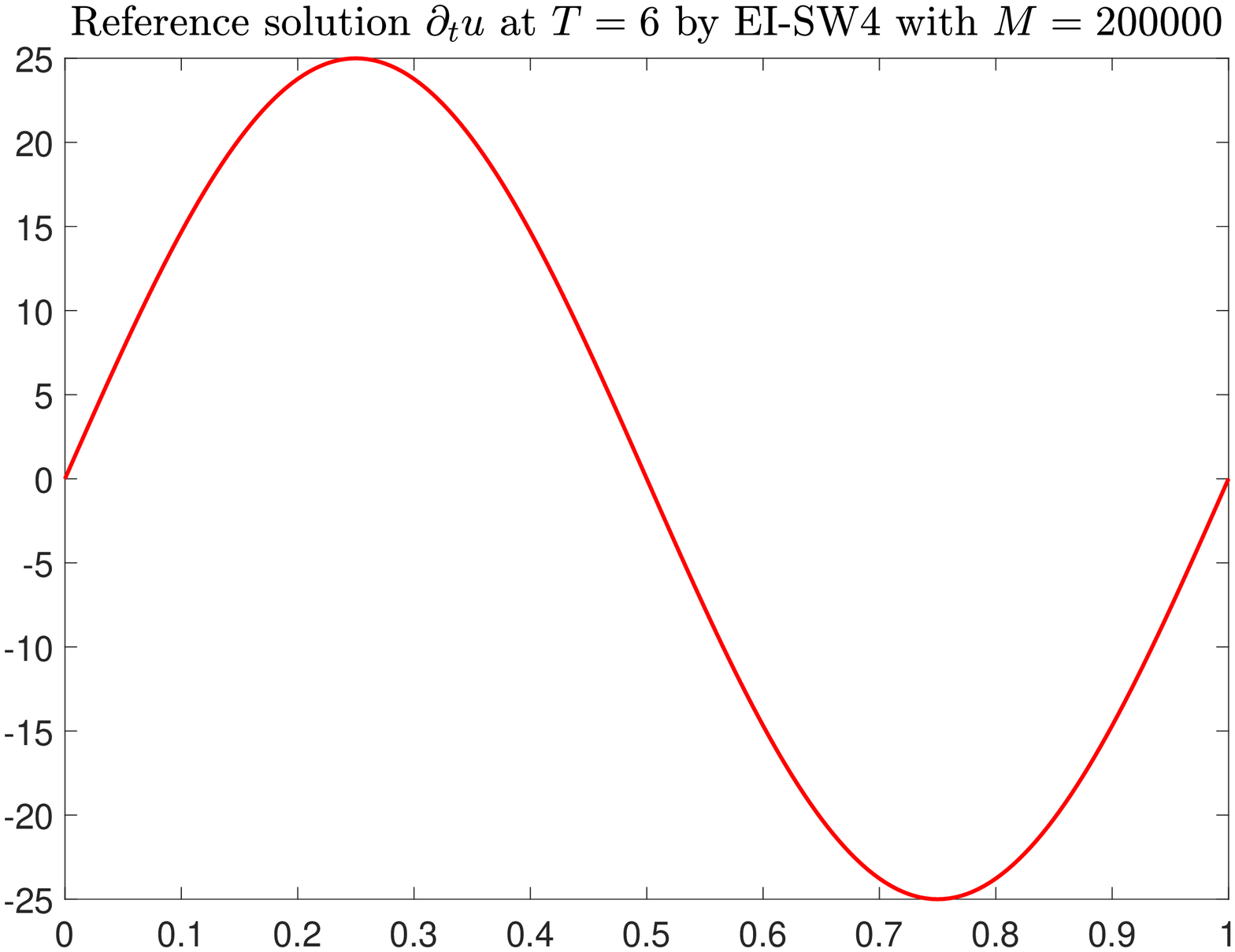}
         \caption{Reference solution $\partial_t u$ at $T = 6$.}
         \label{fig-ex1-refsol-ut}
\end{subfigure}  \\
\begin{subfigure}[]{0.75\textwidth}
         \centering
         \includegraphics[width=\textwidth]{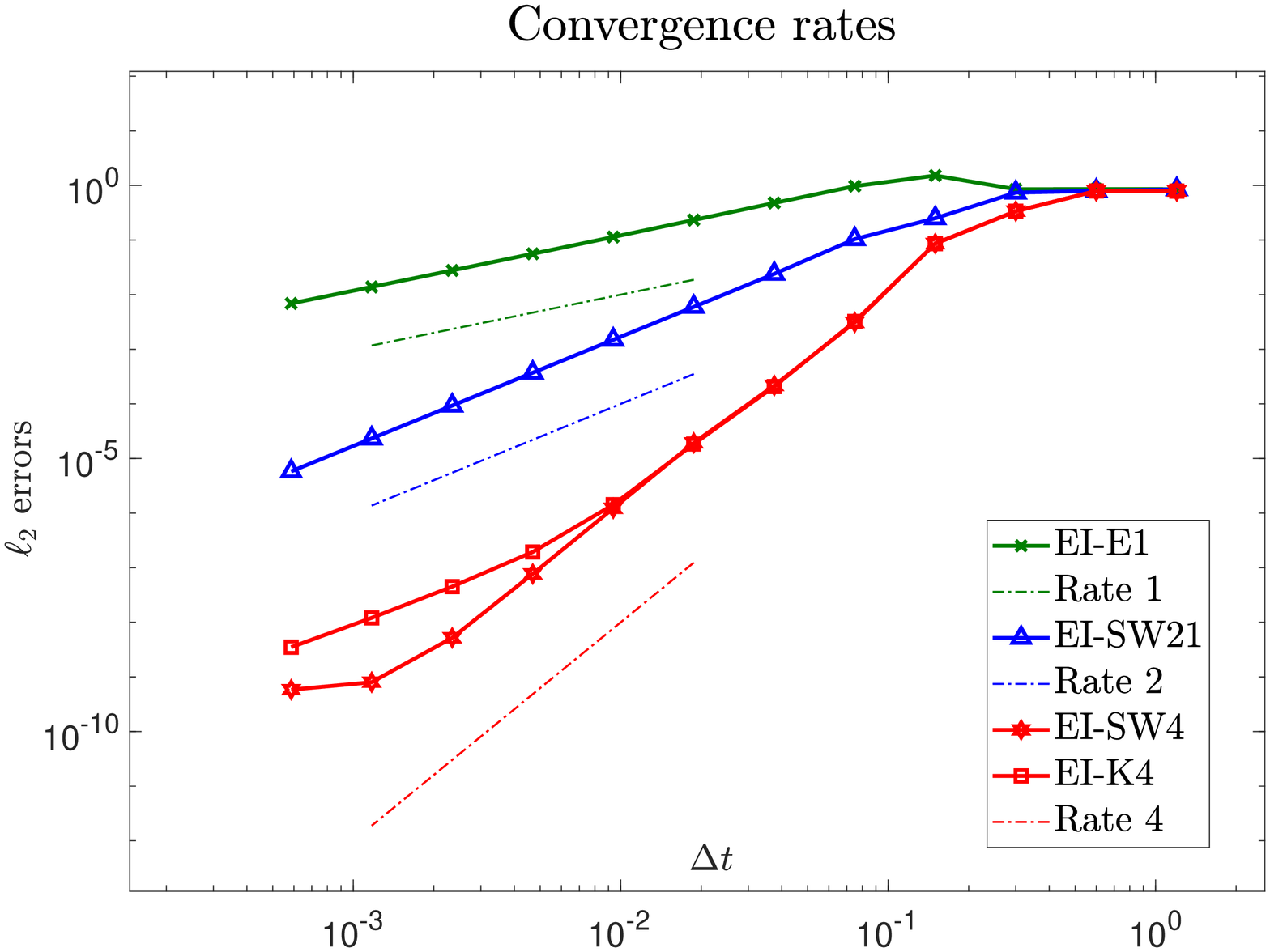}
         \caption{Convergence rates.}
         \label{fig-ex1-rate}
\end{subfigure}
\caption{Example \ref{ex-wave1}}
\end{figure}
\end{ex}

\begin{ex} \label{ex-wave2}
Consider equation \eqref{eq-example-wave} with $\alpha = 100,~\beta = 10^{-2},~\delta = 0,~\gamma = 10^{-3}$. The nonlinear source term is $g(u) = u|u|$. The initial conditions are 
\begin{align*}
p(x) = \begin{cases} 
2x \qquad &\text{if} \quad x \le \frac{1}{2}, \\
-2x+2 \quad &\text{if} \quad x > \frac{1}{2}, 
\end{cases} \quad \quad q (x) = \pi^2 \sin(\pi x). 
\end{align*}
We use {five} different schemes, namely {\ttfamily{EI-E1}}, {\ttfamily{EI-SW21}}, {\ttfamily{EI-SW22}} (both schemes with {$c_2 = 0.2$}), {\ttfamily{EI-SW4}}, and {\ttfamily{EI-K4}} to compute the solution at time $T = 15$ with $M \in \{20, 40, 80, \cdots, 20 \cdot 2^{12}\} $ time steps. The reference solution plotted in Figures \ref{fig-ex2-refsol-u} and \ref{fig-ex2-refsol-ut} is computed by using {\ttfamily{EI-K4}} with $M = 200000$. The errors are plotted in Figure \ref{fig-ex2-rate}. 
\begin{figure}[h]
\centering 
\begin{subfigure}[]{0.495\textwidth}
         \centering
         \includegraphics[width=\textwidth]{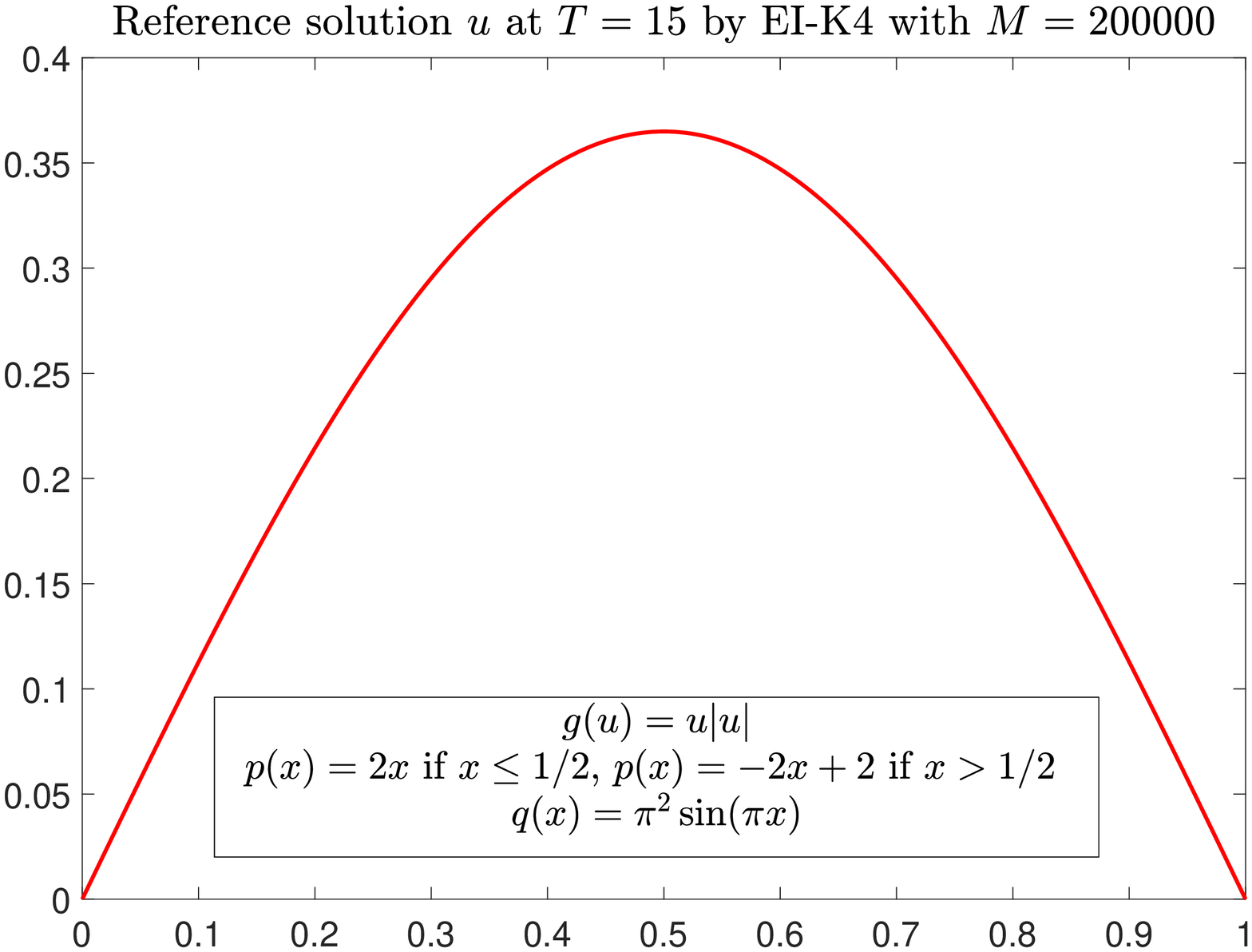}
         \caption{Reference solution $u$ at $T = 15$.}
         \label{fig-ex2-refsol-u}
\end{subfigure}
\begin{subfigure}[]{0.495\textwidth}
         \centering
         \includegraphics[width=\textwidth]{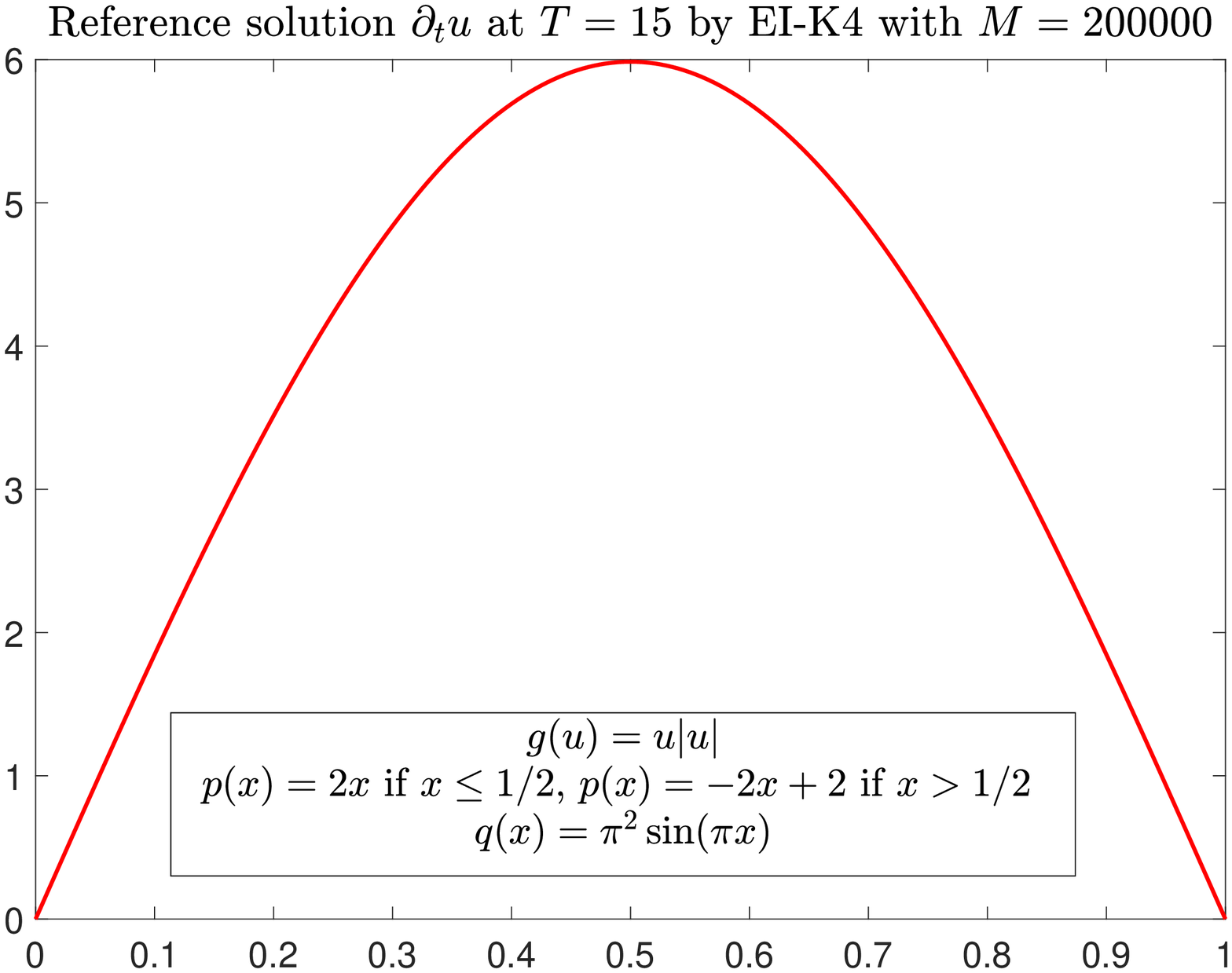}
         \caption{Reference solution $\pdt u$ at $T = 15$.}
         \label{fig-ex2-refsol-ut}
\end{subfigure}
 \\
\begin{subfigure}[]{0.8\textwidth}
         \centering
         \includegraphics[width=\textwidth]{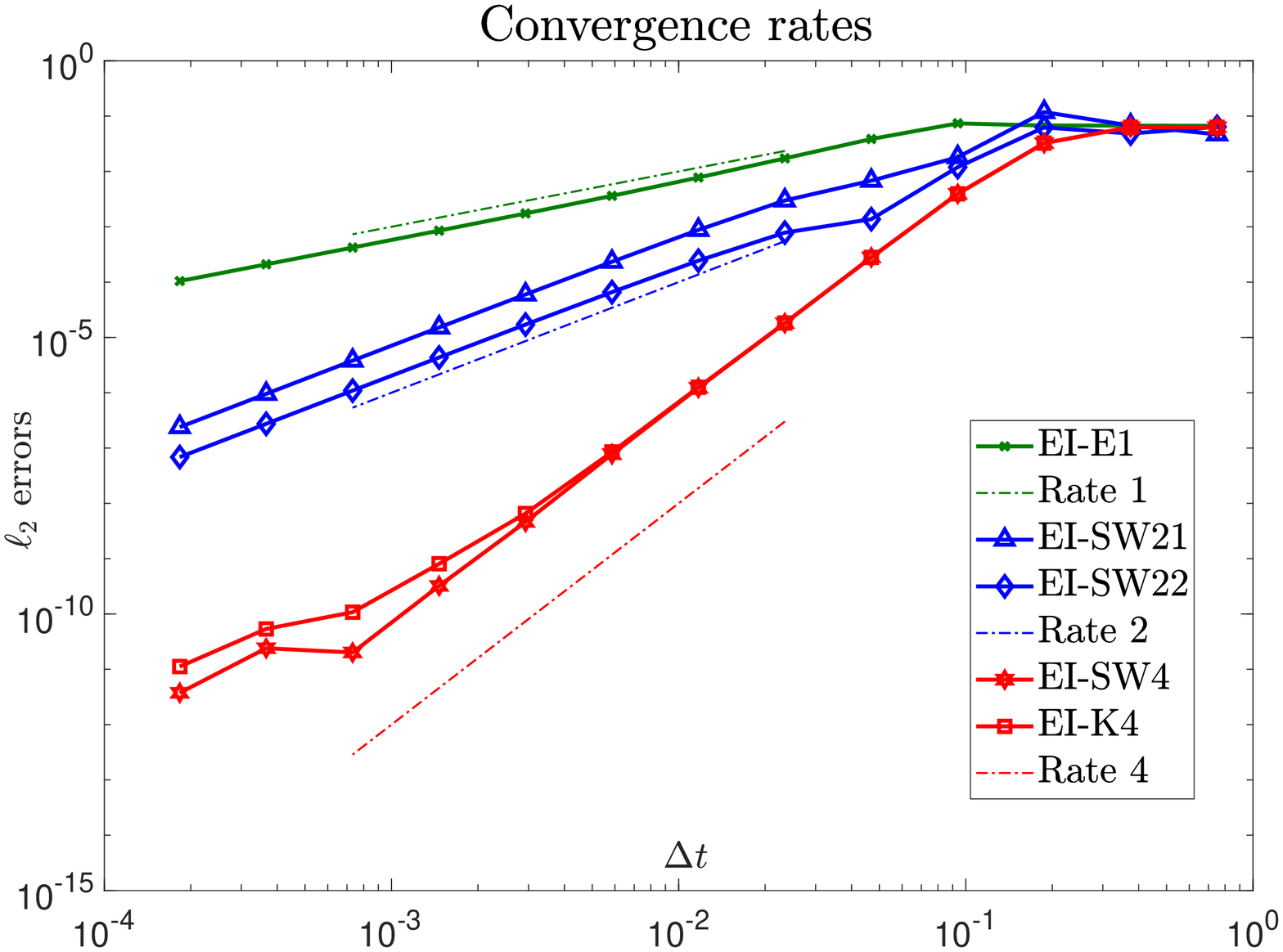}
         \caption{Convergence rates.}
         \label{fig-ex2-rate}
\end{subfigure}
\caption{Example \ref{ex-wave2}}
\end{figure}
\end{ex}

\begin{ex} \label{ex-wave3}
Consider equation \eqref{eq-example-wave} with $\alpha = 15,~\beta = 10^{-3},~  \delta = 1,~\gamma = 10^{-6}$. The nonlinear source term is $g(u) = u^3$. The initial conditions are $p(x) = 10 \sin(3\pi x),~q (x) = -10 \cos(3\pi x)  $. We use all five schemes, namely {\ttfamily{EI-E1}}, {\ttfamily{EI-SW22}} (with $c_2 = 0.9$), {\ttfamily{EI-K4}}, and {\ttfamily{EI-SW4}} to compute the solution at time $T = 30$ with $M \in \{20, 40, 80, \cdots, 20 \cdot 2^{11}\}$ time steps. We plot the reference solution computed with \textsc{EI-SW4}  and $M = 300000$ in Figures \ref{fig-ex3-refsol-u} and \ref{fig-ex3-refsol-ut}. Again the expected convergence rates are observed for each scheme and plotted in Figure \ref{fig-ex3-rate}. An order reduction occurs for {\ttfamily{EI-K4}} (reduction to order 2) while {\ttfamily{EI-SW4}} still preserves its convergence rate.

\begin{figure}[h]
\centering 
\begin{subfigure}[]{0.495\textwidth}
         \centering
         \includegraphics[width=\textwidth]{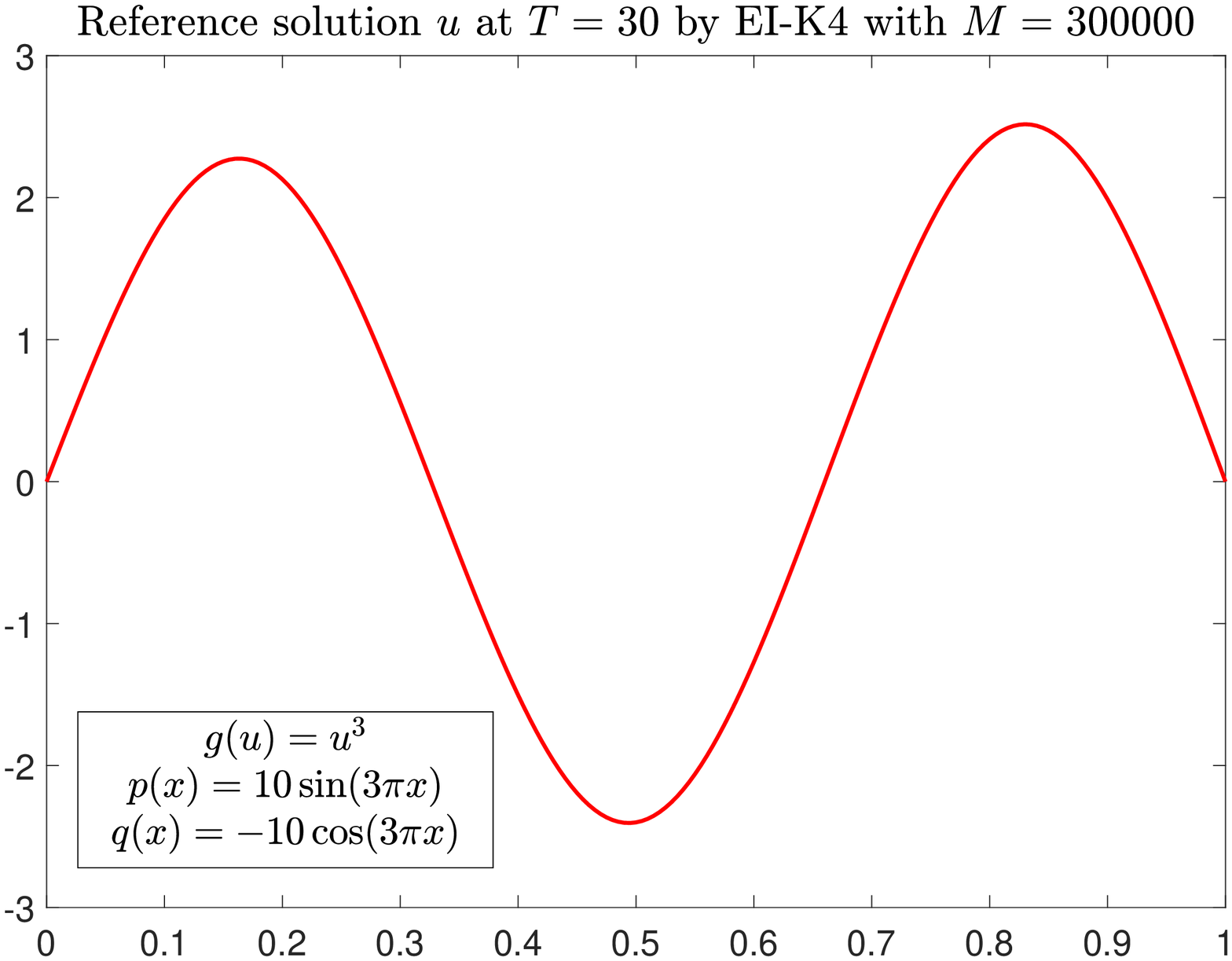}
         \caption{Reference solution $u$ at $T = 30$.}
         \label{fig-ex3-refsol-u}
\end{subfigure}
\begin{subfigure}[]{0.495\textwidth}
         \centering
         \includegraphics[width=\textwidth]{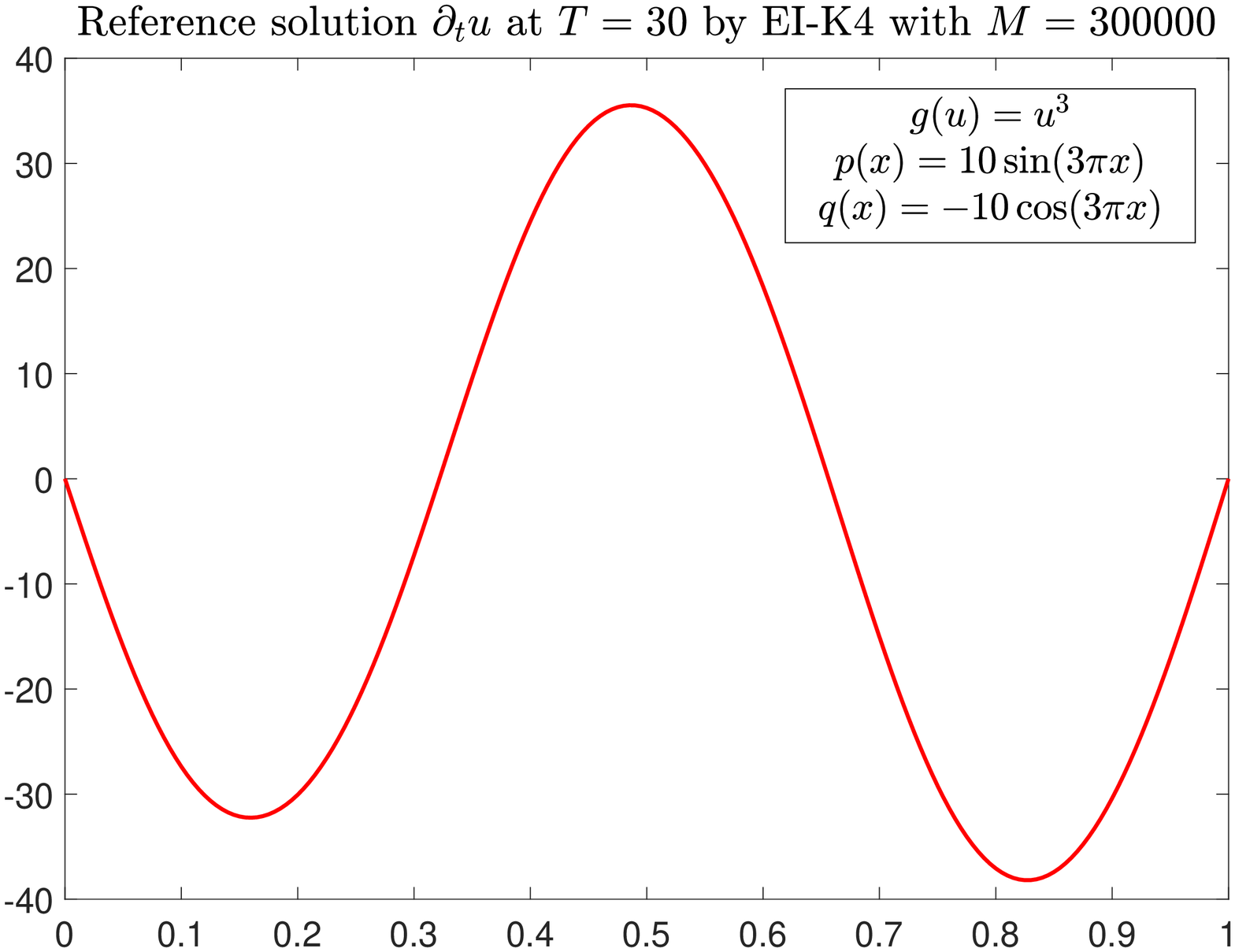}
         \caption{Reference solution $\pdt u$ at $T = 30$.}
         \label{fig-ex3-refsol-ut}
\end{subfigure}
 \\
\begin{subfigure}[]{0.8\textwidth}
         \centering
         \includegraphics[width=\textwidth]{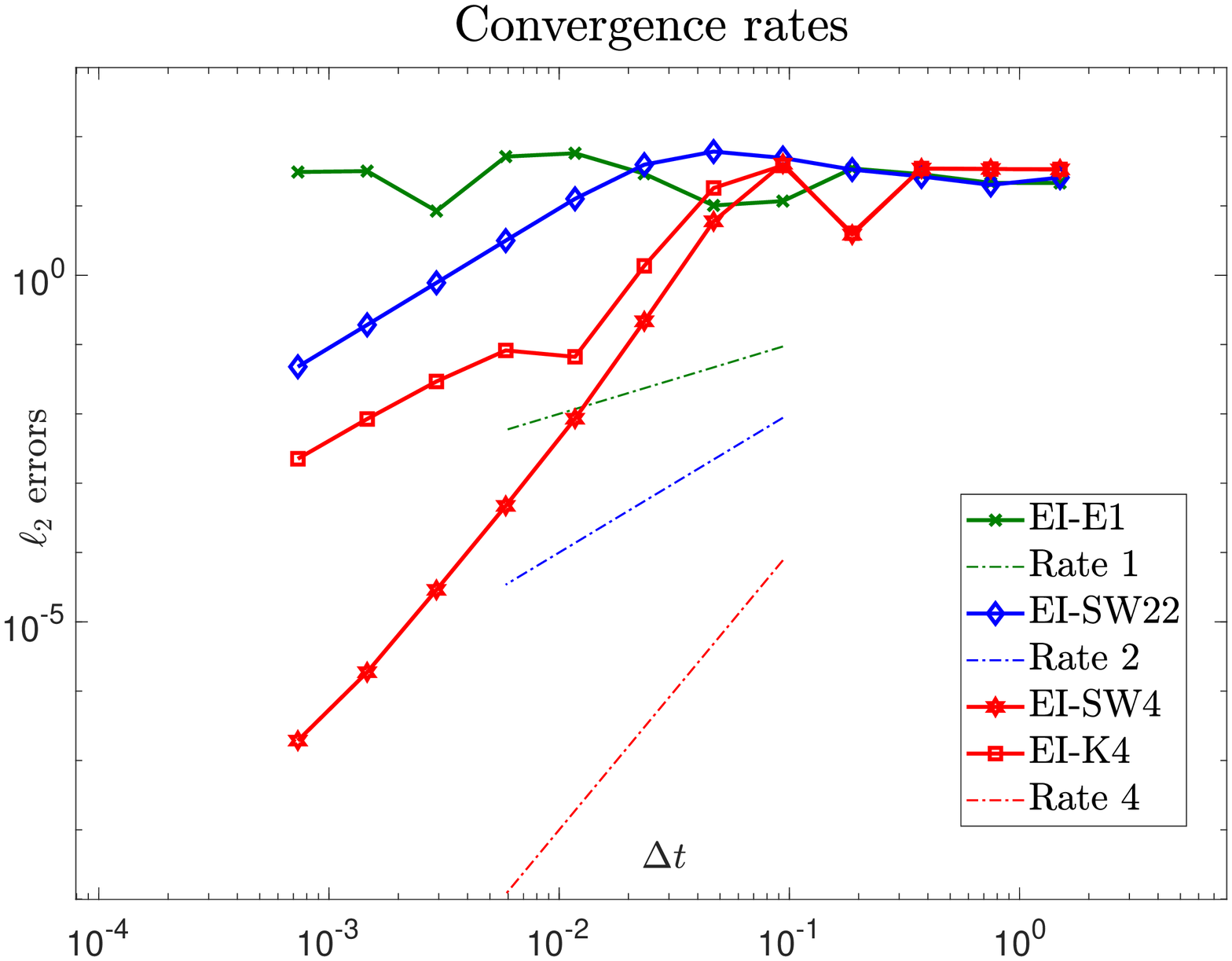}
         \caption{Convergence rates.}
         \label{fig-ex3-rate}
\end{subfigure}
\caption{Example \ref{ex-wave3}}
\end{figure}
\end{ex}

\begin{ex} \label{ex-wave4} This example concerns discontinuous initial conditions
\begin{align*} p(x) = 
\begin{cases} 
-1\qquad &\text{if} \quad x \le \frac{1}{2}, \\
5\qquad &\text{if} \quad x > \frac{1}{2}, 
\end{cases} \quad \quad q(x) = 0. 
\end{align*}
The other parameters are $\alpha = 5,~~\beta = 10^{-3},~\delta = 1,~\gamma = 10^{-4}$. The nonlinear term is $g(u) = |u|$. The approximate solutions at $T = 3$ are computed by using five exponential integrators, namely {\ttfamily{EI-E1}}, {\ttfamily{EI-SW21}}, {\ttfamily{EI-SW22}} (both schemes with $c_2 = 0.5$), {\ttfamily{EI-SW4}}, and {\ttfamily{EI-K4}} with $M \in \{20, 40, \dots, 20\cdot2^{13}\}$ time steps. The reference solution computed with $M = 300000$ time steps by using {\ttfamily{EI-SW4}} is plotted in Figures \ref{fig-ex4-refsol-u} and \ref{fig-ex4-refsol-ut}. The errors are plotted in Figure \ref{fig-ex4-rate}. We observe an order reduction to order 2 for the two fourth-order exponential integrators {\ttfamily{EI-SW4}} and {\ttfamily{EI-K4}} while the other integrators preserve their convergence rate. 
\begin{figure}[h]
\centering 
\begin{subfigure}[]{0.495\textwidth}
         \centering
         \includegraphics[width=\textwidth]{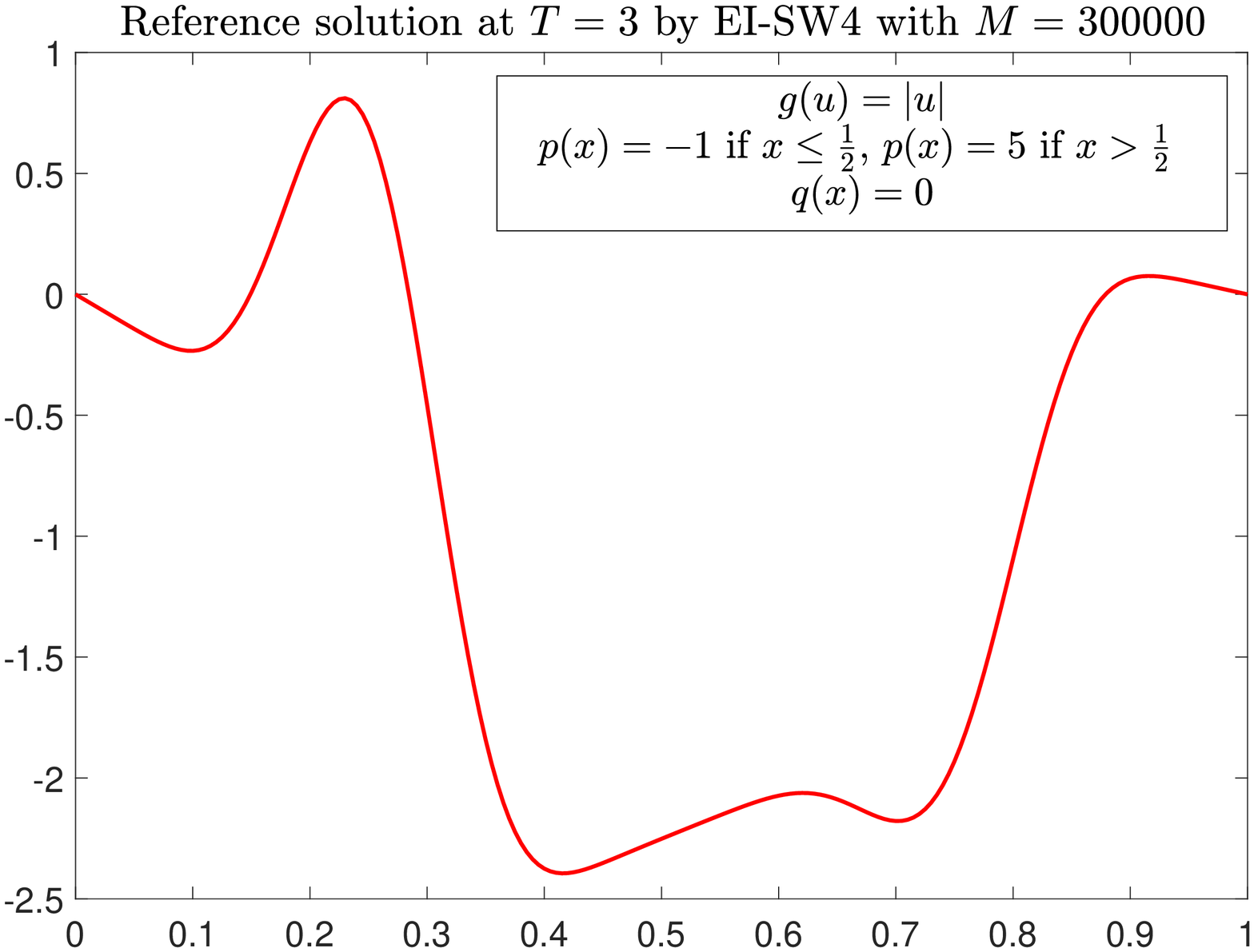}
         \caption{Reference solution $u$ at $T = 3$.}
         \label{fig-ex4-refsol-u}
\end{subfigure}
\begin{subfigure}[]{0.495\textwidth}
         \centering
         \includegraphics[width=\textwidth]{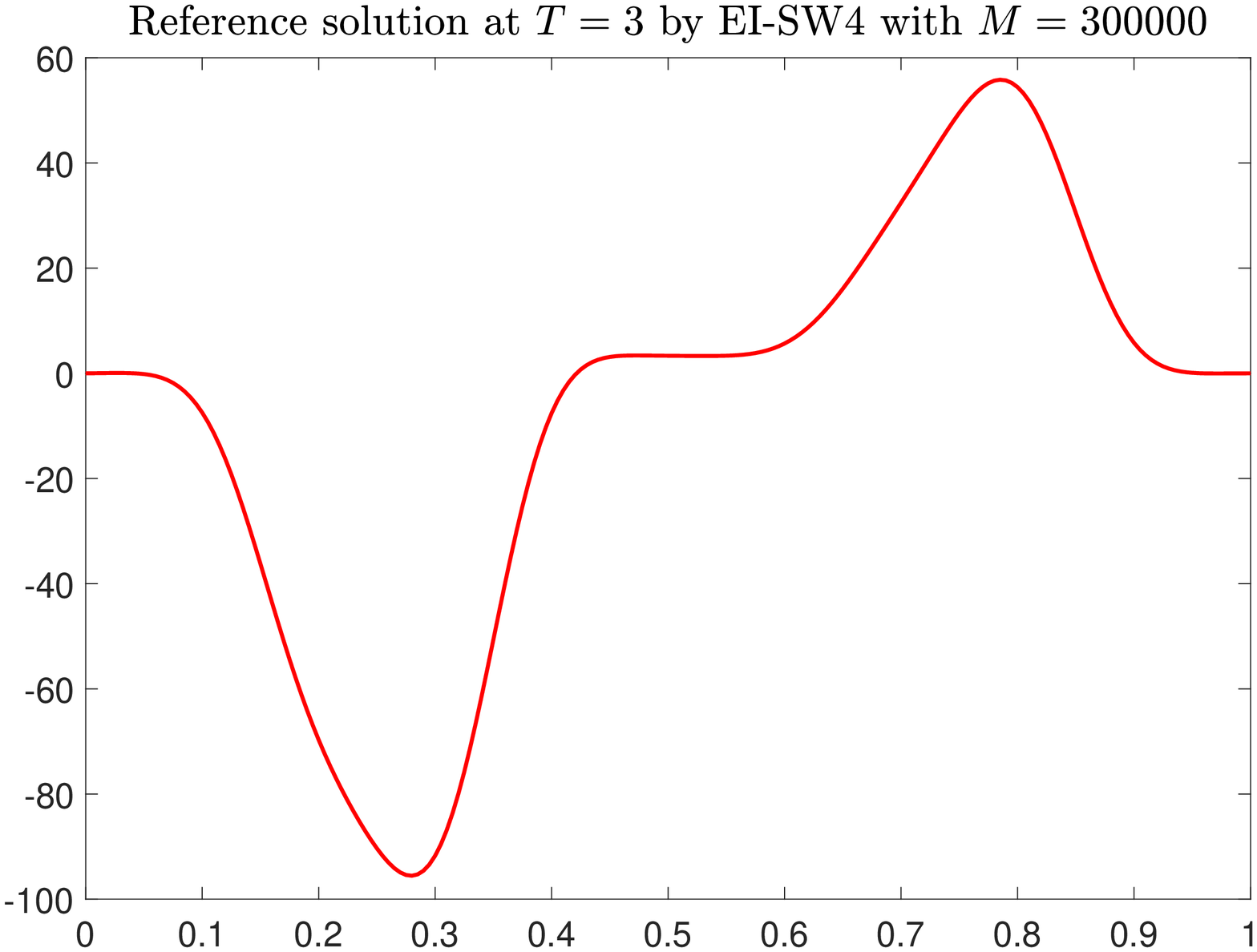}
         \caption{Reference solution $\pdt u$ at $T = 3$.}
         \label{fig-ex4-refsol-ut}
\end{subfigure}
 \\
\begin{subfigure}[]{0.8\textwidth}
         \centering
         \includegraphics[width=\textwidth]{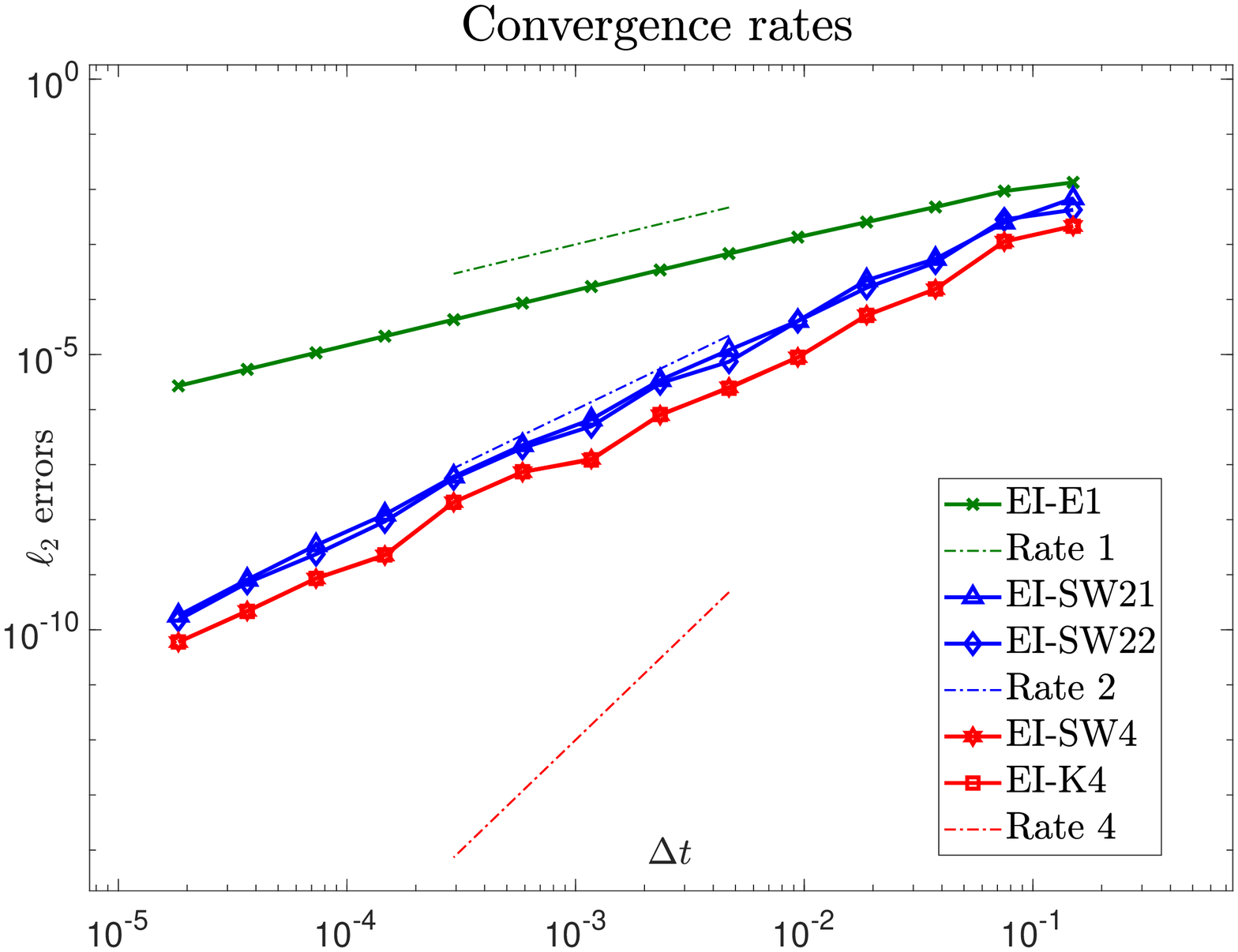}
         \caption{Convergence rates.}
         \label{fig-ex4-rate}
\end{subfigure}
\caption{Example \ref{ex-wave4}}
\end{figure}

\end{ex}

\begin{ex} \label{ex-wave5} The last example concerns two nonlinear terms, namely $g(u) = -u |u|^3$, $h(w) = -w|w|$. The other parameters are $\alpha = 50,~\beta = 10^{-6},~\delta = 10,~\gamma = 10^{-3}$. The initial conditions are $p(x) = 20 \sin(4 \pi x)$ and $q(x) = -25 \cos(3\pi x) $. The approximate solution at $T=1$ are computed by using five exponential integrators, namely {\ttfamily{EI-E1}}, {\ttfamily{EI-SW21}}, {\ttfamily{EI-SW22}} (both schemes with $c_2 = 0.85$), {\ttfamily{EI-SW4}}, and {\ttfamily{EI-K4}} with $M \in \{160, 320, \dots, 160 \cdot 2^{10}\}$ time steps. The reference solution is computed by {\ttfamily{EI-SW4}} with $M = 800000$ and plotted in Figures \ref{fig-ex5-refsol-u} and \ref{fig-ex5-refsol-ut}. The convergence rates are plotted in Figure \ref{fig-ex5-rate}. The two fourth-order exponential integrators  {\ttfamily{EI-SW4}} and {\ttfamily{EI-K4}} show order reductions. While {\ttfamily{EI-SW4}} works still well with an order reduction to order 3; {\ttfamily{EI-K4}} on the other hand works badly. 
\begin{figure}[h]
\centering 
\begin{subfigure}[]{0.495\textwidth}
         \centering
         \includegraphics[width=\textwidth]{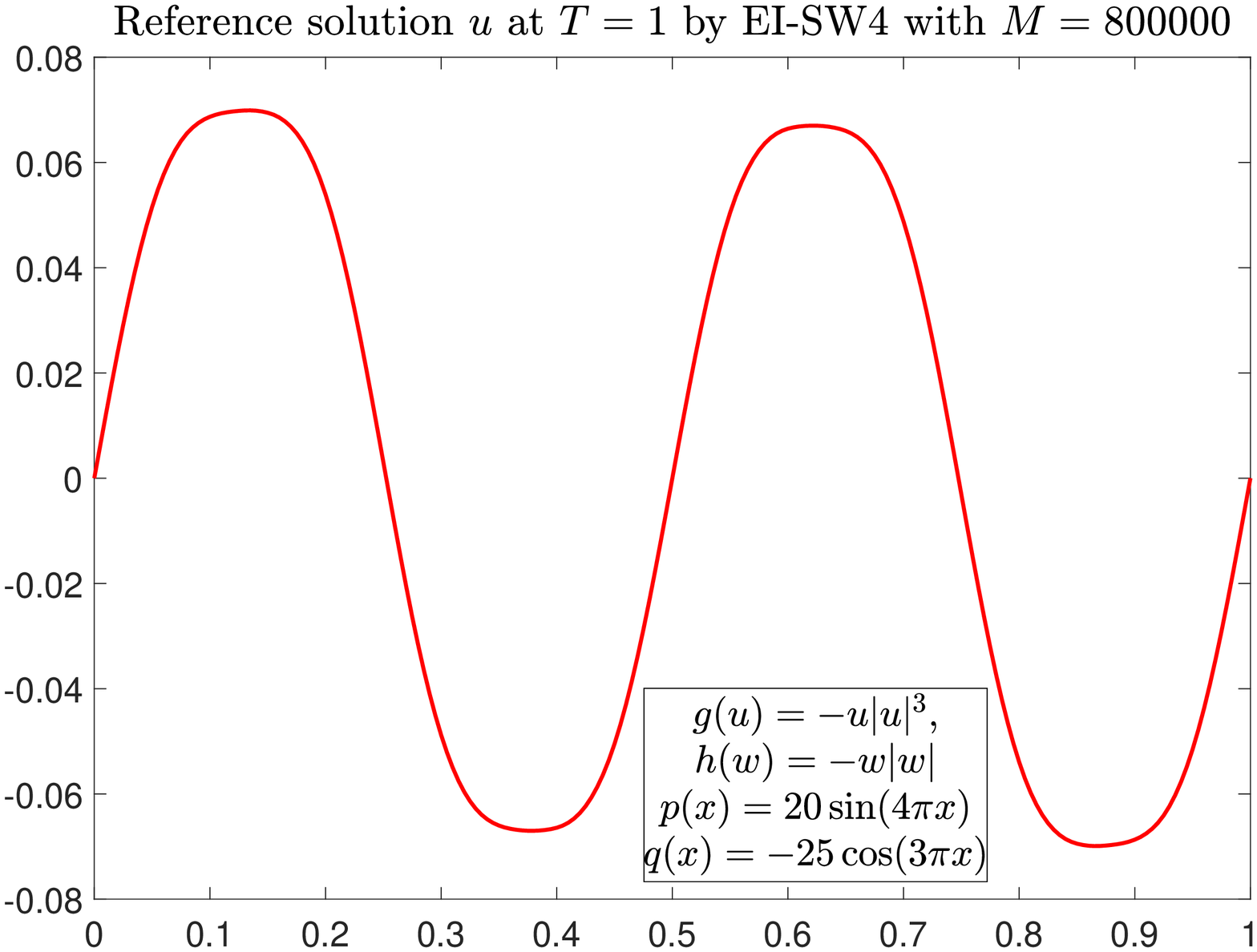}
         \caption{Reference solution $u$ at $T = 1$.}
         \label{fig-ex5-refsol-u}
\end{subfigure}
\begin{subfigure}[]{0.495\textwidth}
         \centering
         \includegraphics[width=\textwidth]{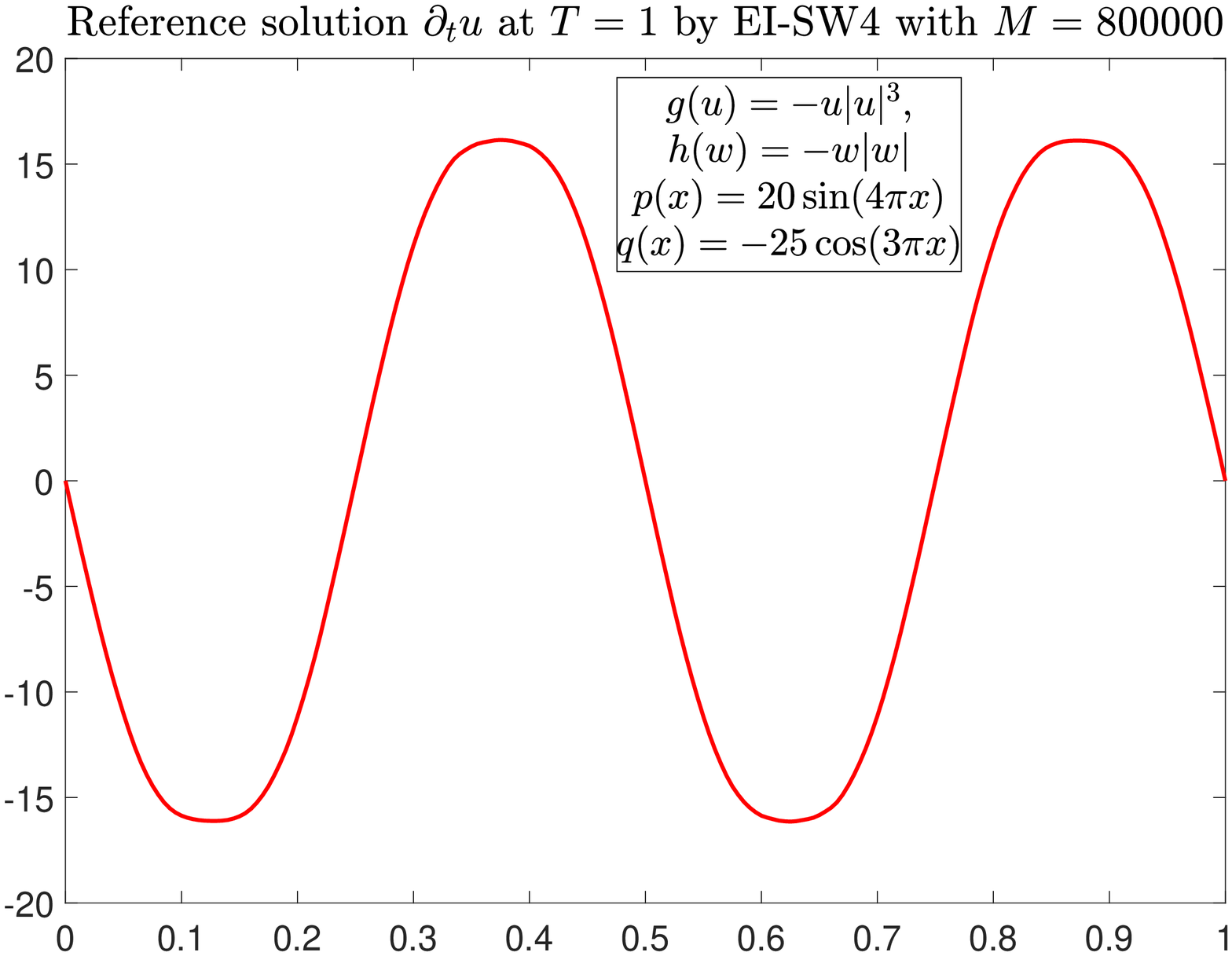}
         \caption{Reference solution $\pdt u$ at $T = 1$.}
         \label{fig-ex5-refsol-ut}
\end{subfigure}
 \\
\begin{subfigure}[]{0.8\textwidth}
         \centering
         \includegraphics[width=\textwidth]{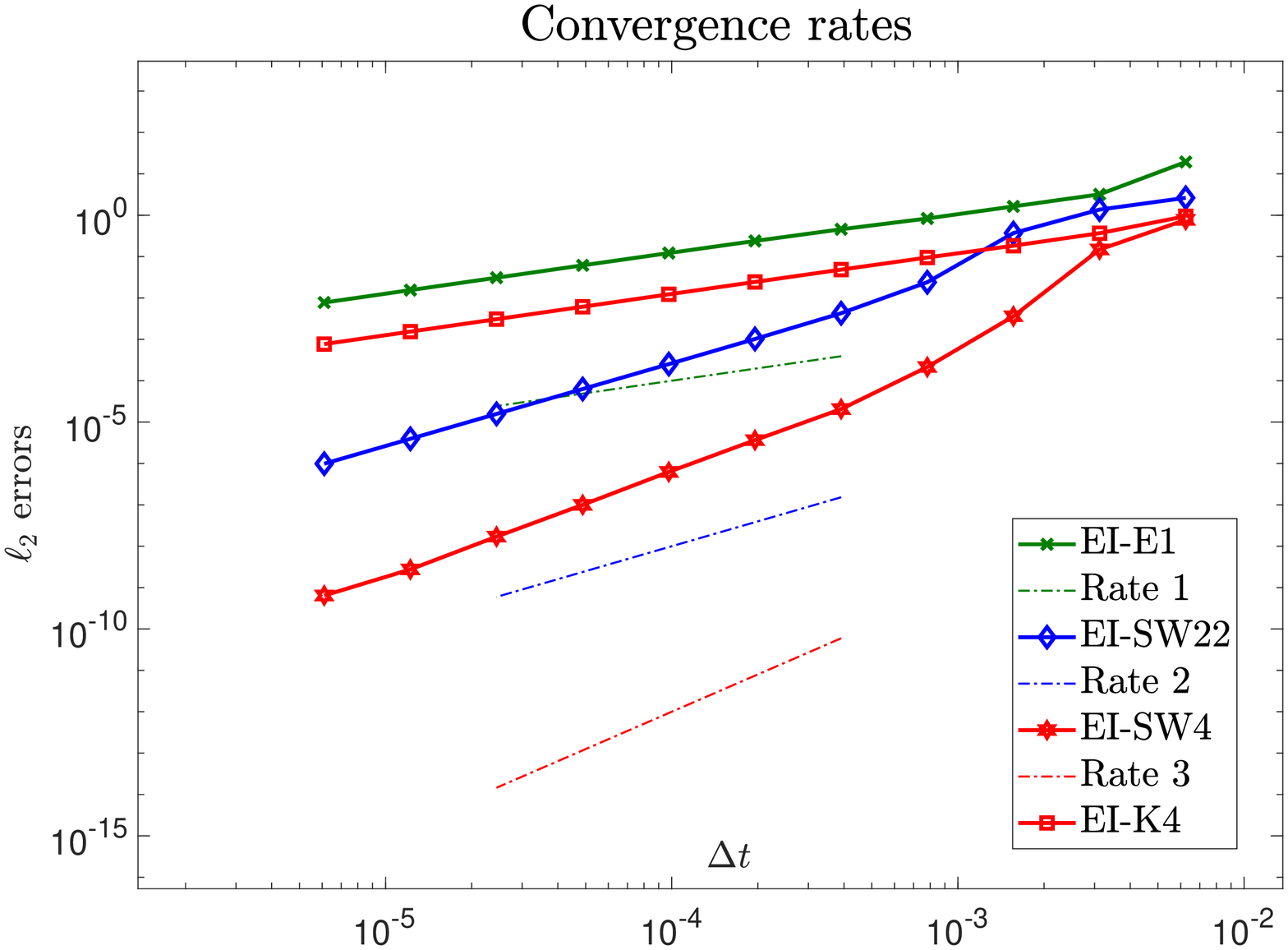}
         \caption{Convergence rates.}
         \label{fig-ex5-rate}
\end{subfigure}
\caption{Example \ref{ex-wave5}}
\end{figure}
\end{ex}

\subsection{Railway track model}
Assume that a track beam is made of Kelvin-Voigt material. The resulting railway track model is a semilinear PDE on $\Omega = (0,L)$: 
\begin{subequations}\label{eq-example-beam}
\begin{align} 
\pdtt u &+ \pdxx (\alpha \pdxx u + \beta \pdxx(\pdt u) ) + \gamma \pdt u  + \delta u = -lu^3, \quad x \in \Omega, t \in (0,T], \\
u(t,0) &= u(t,L) = 0, \\
\alpha \pdxx u (t,0) &+ \beta \pdxx(\pdt u) (t,0) =     \alpha \pdxx u (t,L) + \beta \pdxx(\pdt u) (t,L) = 0, \\
u(0,x) &= p(x), \quad \pdt u (0,x) = q(x). 
\end{align}
\end{subequations}
Denote the closed self-adjoint positive operator $\cA_0$ on $L^2(0,L)$ as 
\begin{align*}
\cA_0 \phi &\coloneqq \pdxxxx \phi, \\
D(\cA_0) &\coloneqq \{\phi \in H^4(\Omega) \mid \phi(0) = \phi(L) = 0,~~ \phi''(0) = \phi''(L) = 0 \}.
\end{align*}
Concerning the analysis of the linear operators, we refer to the literature \cite{Mor20,BanIto97}.
We use finite differences to discretize the operator $\cA_0$ with an equidistant space mesh $x_i = i \Delta x,~~ i \in \{0,\dots,N+1\} $, where $N$ is a given integer and $\Delta x = \frac{L}{N + 1}$. Then the discrete operator $\cA_0$ is given by the matrix $S_b \in \R^{N \times N}$, defined as below
\begin{align} \label{eq-discrete-Sb}
S_b = \frac{1}{\Delta x^4} 
\bmat{
5 & -4 & 1 & 0 & 0 & 0  & \dots & 0 & 0 & 0  & 0 & 0  \\
-4 & 6 & -4 & 1 & 0 & 0 & \dots & 0 & 0 & 0  & 0 & 0 \\
1 & -4 & 6 & -4 & 1 & 0 & \dots & 0 & 0 & 0  & 0 & 0  \\
0 & 1 & -4 & 6 & -4 & 1  & \dots & 0 & 0 & 0 & 0  & 0  \\
\vdots & \ddots & \ddots  & \ddots  & \ddots  & \ddots   & \ddots  & \ddots  & \ddots  & \ddots  & \ddots  & \vdots \\
\vdots & \ddots & \ddots  & \ddots  & \ddots  & \ddots   & \ddots  & \ddots  & \ddots  & \ddots  & \ddots  & \vdots \\
\vdots & \ddots & \ddots  & \ddots  & \ddots  & \ddots   & \ddots  & \ddots  & \ddots  & \ddots  & \ddots  & \vdots \\
\vdots & \ddots & \ddots  & \ddots  & \ddots  & \ddots   & \ddots  & \ddots  & \ddots  & \ddots  & \ddots  & \vdots \\
\vdots & \ddots & \ddots  & \ddots  & \ddots  & \ddots   & \ddots  & \ddots  & \ddots  & \ddots  & \ddots  & \vdots \\
0 & \dots & \dots & \dots & \dots &  \dots   & \ddots & 1 & -4 & 6 & -4 & 1 \\
0& \dots & \dots & \dots & \dots &  \dots   & \ddots & 0 & 1 & -4 & 6 & -4 \\
0& \dots & \dots & \dots & \dots &  \dots   & \dots & \dots & 0 & 1 & -4 & 5 \\
}.
\end{align}

\begin{ex} \label{ex-beam1} Consider equation \eqref{eq-example-beam} with $\alpha = 15,~\beta = 3 \cdot 10^{-6},~\delta = 10,~\gamma = 3\cdot 10^{-4}$. The nonlinear term is $g(u) = -5u^3$. The initial conditions are 
\begin{align*}
p(x) = 5 \me^{-100\left(x-\frac{2}{3}\right)^2},\quad q(x) = 0. 
\end{align*}
For our numerical solution, the space interval $\Omega = (0,1)$ is divided into 300 equidistant subintervals. We compute approximate solutions at $T = 5$ with four exponential integrators {\ttfamily{EI-E1}}, {\ttfamily{EI-SW22}} ($c_2 = 0.9$), {\ttfamily{EI-SW4}}, and  {\ttfamily{EI-K4}} with $M \in \{160,320,\dots, 160 \cdot 2^{10}\}$ time steps. We compare these numerical results with the reference solution evaluated by {\ttfamily{EI-K4}} with $M = 600000$ time steps. The reference solution is plotted in Figures \eqref{fig-ex1b-refsol-u} and \eqref{fig-ex1b-refsol-ut}. Notice that the magnitude of the velocity $\pdt u$ is extremely big. The errors are plotted in Figure \ref{fig-ex1b-rate}. The four exponential integrators preserve their convergence rate. Since the matrix $S_b$ is stiffer than the one $S_w$, the computation for beam equations is more expensive than for wave equations. However, we note that solving beam equations with exponential integrators is a good option. Some comparisons in the next section will elucidate this point.

\begin{figure}[h]
\centering 
\begin{subfigure}[]{0.495\textwidth}
         \centering
         \includegraphics[width=\textwidth]{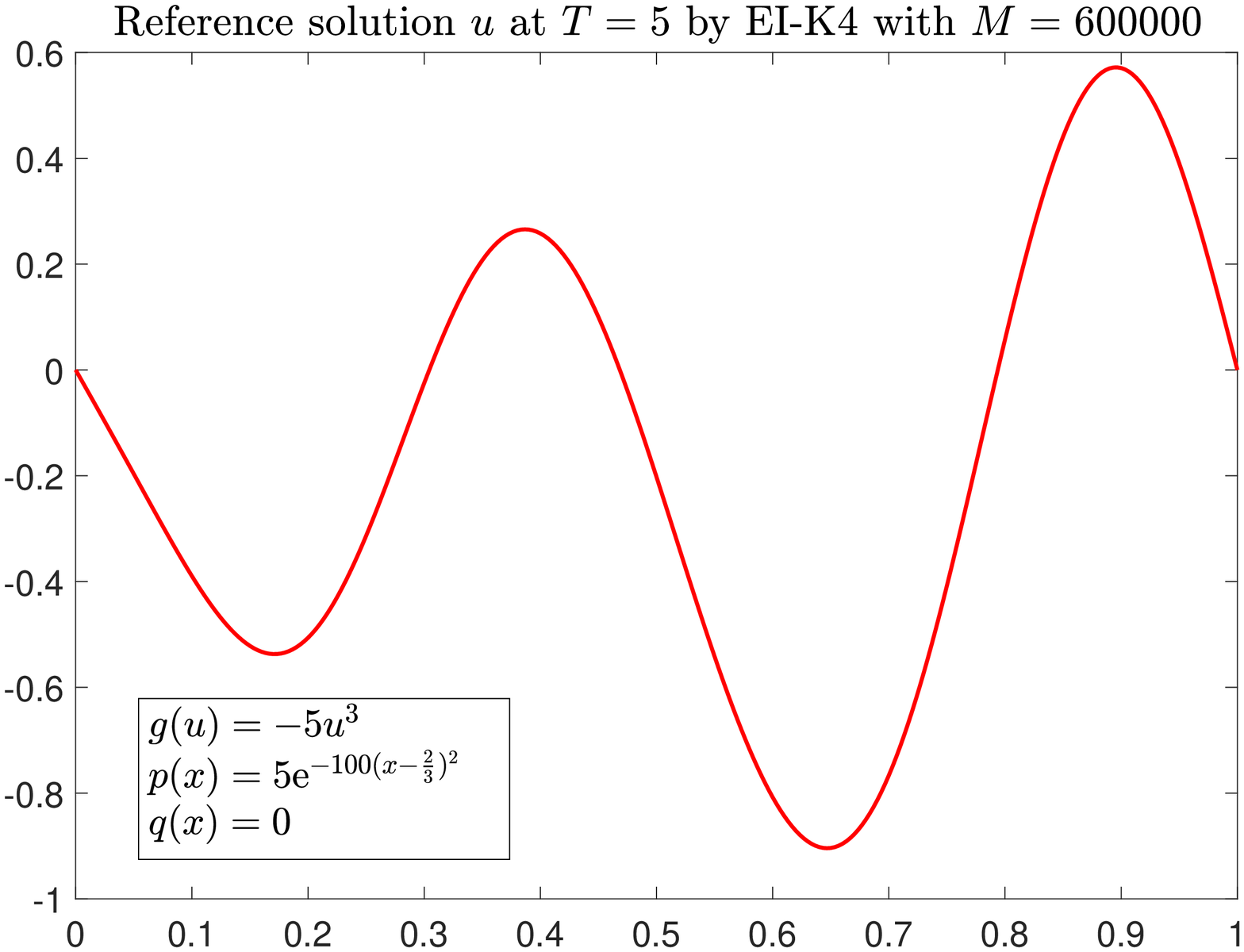}
         \caption{Reference solution $u$ at $T = 5$.}
         \label{fig-ex1b-refsol-u}
\end{subfigure}
\begin{subfigure}[]{0.495\textwidth}
         \centering
         \includegraphics[width=\textwidth]{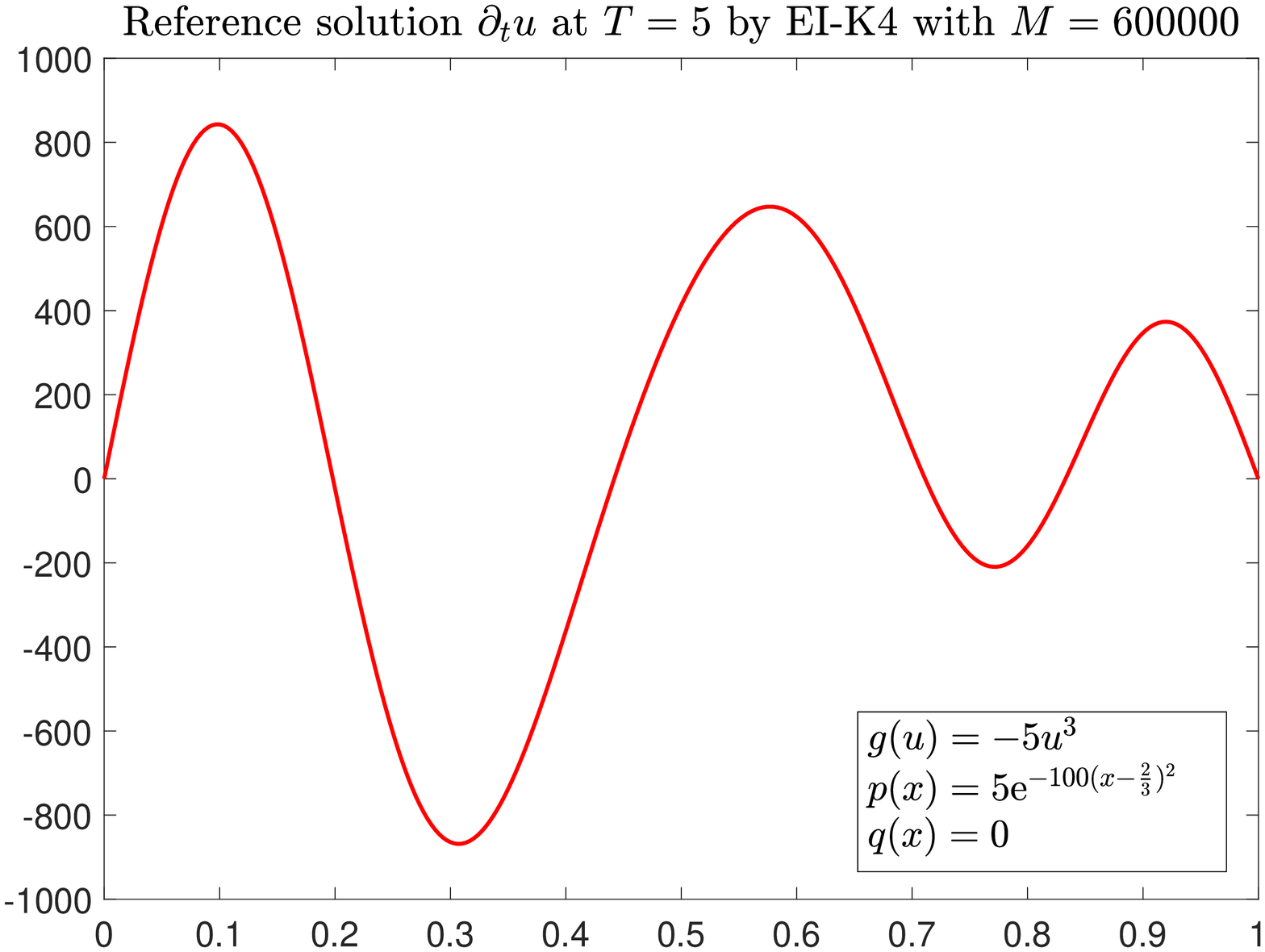}
         \caption{Reference solution $\pdt u$ at $T = 5$.}
         \label{fig-ex1b-refsol-ut}
\end{subfigure}
 \\
\begin{subfigure}[]{0.8\textwidth}
         \centering
         \includegraphics[width=\textwidth]{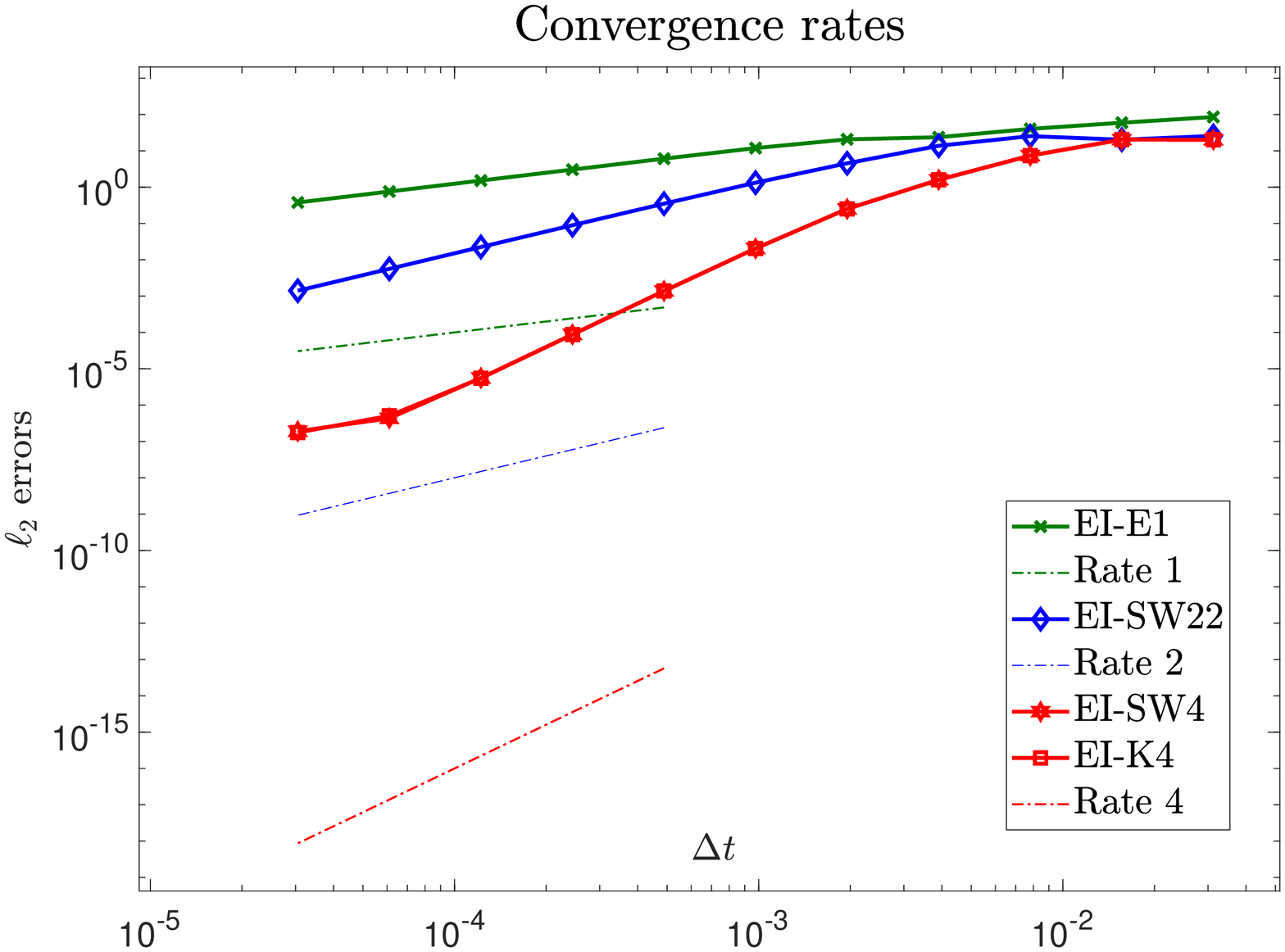}
         \caption{Convergence rates.}
         \label{fig-ex1b-rate}
\end{subfigure}
\caption{Example \ref{ex-beam1}}
\end{figure}
\end{ex}

\subsection{Comparisons with standard integrators} \label{sec-comparison}
Some comparisons between our approach and standard integrators will be presented to clarify the efficiency. In particular, we consider the explicit Runge-Kutta method {\ttfamily{ode45}} from MATLAB. Note that {\ttfamily{ode45}} needs sufficiently small time steps to guarantee stability. The CFL condition depends on the set up of our model. For example, the larger the parameter $\alpha$ we choose in the example, the smaller the time step $\Delta t$ has to been chosen. Besides, the maximum step size depends on the type of equation, i.e. the beam equation is stiffer than the wave equation. In particular, the relation between $\Delta t$ and $\Delta x$ is of the form: $\Delta t \sim (\Delta x)^2$ for beam equations, and $\Delta t \sim \Delta x$ for wave equations.  

Stiff problems are often solved with implicit schemes. Therefore, we will make another comparison with a class of implicit Runge-Kutta methods, namely the Radau IIA methods  (see \cite[Section IV-5]{HaiWan96}). Though these methods do not require any CFL condition to guarantee stability, their computational cost is high since they require the solution of linear systems with large matrices. Below, we illustrate by some examples that both explicit and implicit Runge-Kutta methods are more expensive than our exponential integrators in the present context. 

\begin{ex} \textit{Exact matrix exponential versus ode45 and Radau scheme for a linear example.} \label{ex-compare-linsys}
Consider a linearized version of equation \eqref{eq-example-wave} with $\alpha =100,~\beta = 10^{-2},~\delta = 10^{-2} ,~\gamma = 10^{-6}$. The initial conditions are $p(x) = 5 \sin(2\pi x)$ and $q(x) = 0$. We consider the space interval $\Omega = (0,1)$ with $N = 200$ (number of grid points). 

We compute the solution at time $T = 10$ by using different methods. For any exponential integrator, the solution is obtained immediately by the formula 
\begin{align*}
y(T) = \bmat{u(T) \\ w(T)} = \me^{TA} \bmat{p \\ q}, \quad A = \bmat{0 & I \\ -\alpha S_w - \beta I & - \beta S_w - \gamma I},
\end{align*}
where $S_w$ was defined in \eqref{eq-discrete-Sw}. The computational time for using this approach is 0.036s. 

For comparisons, we use an explicit Runge-Kutta method, namely {\ttfamily{ode45}} from MATLAB to obtain the solution at final time $T =10$ with various tolerances. Due to the stability reason, {\ttfamily{ode45}} needs a huge number of time steps. The minimum number of time steps $M$ which  {\ttfamily{ode45}} needs to achieve the corresponding accuracy are presented in the second column of Table~\ref{tab-ode45-linear-wave}. The relating computational time is reported in the third column.
As implicit method, we use the Radau IIA scheme. The minimum number of time steps which {\ttfamily{Radau}}\footnote{ \url{http://www.unige.ch/~hairer/prog/MatlabStiff.7z}} 
needs to obtain the corresponding accuracy is shown in the fourth column of Table \ref{tab-ode45-linear-wave}. The relating computational time is reported in the fifth column. The implicit scheme is more efficient than the explicit one for solving this linear equation.
Obviously to tackle the linear case, both {\ttfamily{ode45}} and {\ttfamily{Radau}} are expensive choices. 

\begin{table}[h!]   \centering
\begin{tabular}{ |P{1.5cm} || P{2cm} |P{2cm} || P{2cm} | P{2cm}|}
\hline
\multirow{2}{*}{Tolerance}   &  \multicolumn{2}{c||}{\ttfamily{ode45}} &  \multicolumn{2}{c|}{{\ttfamily{Radau}}}  \\  \cline{2-5} 
&  $M$ & Time & $M$  & Time\\
\hline 
$10^{-3}$ & 55621    & 18.17s & 32 & 1.33s\\ 
$10^{-6}$ & 55865& 17.63s & 58 & 2.09s  \\ 
$10^{-10}$& 888970 & 26.76s & 99 & 4.03s\\ 
$10^{-12}$&224733 &  68.30s & 284 & 5.25s \\ 
\hline 
\end{tabular} \caption{Number of time steps $M$ that {\ttfamily{ode45}} and {\ttfamily{Radau}} need to compute the solution at $T = 10$ and the corresponding computational time. On the other hand, an exponential integrator needs just 0.036s to get the exact solution.} \label{tab-ode45-linear-wave}
\end{table}

\end{ex}

\begin{ex} \textit{{\ttfamily{EI-K4}} verus ode45 and Radau for a semilinear wave equation.} \label{ex-compare-semisys-wave}
Consider the equation \eqref{eq-example-beam} with $\alpha = 100,~\beta = \gamma = 10^{-3},~\delta = 10$. The nonlinear source term is $g(u) = u^2$. The initial conditions are 
\begin{align*}
p(x) = \begin{cases} 
2x \qquad &\text{if} \quad x \le \frac{1}{2}, \\
-2x+2 \quad &\text{if} \quad x > \frac{1}{2}, 
\end{cases} \quad \quad q (x) = \pi^2 \sin(\pi x). 
\end{align*}
We compute the solution at the final time $T = 15$ by {\ttfamily{EI-K4}}, {\ttfamily{ode45}}, and {\ttfamily{Radau}}. To solve semilinear equations, three integrators need a sufficient number of substeps to attain the solution at final time. Corresponding to the desired accuracy, the number of steps as well as the required computational time of two schemes are reported in the Table \ref{tab-ode45-semilinear-wave}. For all accuracies, {\ttfamily{EI-K4}} is more efficient than the two other schemes. Even though {\ttfamily{Radau}} needs less number of time steps to achieve the desired accuracy, the computational cost is really high since it needs to solve a linear system involving a large matrix at each time step. 
\begin{table}[h!]   \centering
\begin{tabular}{ |P{1.5cm} || P{1.25cm} |P{1.25cm} || P{1.25cm} | P{1.25cm}|| P{1.25cm} |P{1.25cm} |}
\hline
\multirow{2}{*}{Tolerance}   &  \multicolumn{2}{c||}{\ttfamily{EI-K4}} &  \multicolumn{2}{c||}{\ttfamily{ode45}} &
\multicolumn{2}{c|}{\ttfamily{Radau}} \\  
\cline{2-7} 
&  $M$ & Time & $M$  & Time & $M$  & Time\\
\hline 
$10^{-4}$ & 20   & 0.36s & 123149 & 41.05s & 1170 & 213.01s\\ 
$10^{-6}$ & 640 & 6.56s & 128693 & 42.75s & 1734 & 319.57s  \\ 
$10^{-8}$& 2560 & 14.26s & 167861 & 54.85s & 2128 & 273.88s\\ 
$10^{-10}$& 20480 & 99.65s & 378641 & 119.29s & 3917 & 827.20s\\ 
\hline 
\end{tabular} \caption{Comparison among {\ttfamily{EI-K4}}, {\ttfamily{ode45}}, and {\ttfamily{Radau}}  for a semilinear wave equation. The number of time steps for the exponential integrator is empirically chosen to reach the prescribed accuracy.} \label{tab-ode45-semilinear-wave}
\end{table} 

\end{ex}

\begin{ex} \textit{{\ttfamily{EI-K4}} verus ode45 and Radau for a semilinear beam equation.}  \label{ex-compare-semisys-beam}
We repeat example \ref{ex-beam1} with a smaller matrix $S_b$, i.e., the space interval $\Omega = (0,1)$ is divided into 200 equidistant subintervals. The solution at time $T=1$ is computed with {\ttfamily{EI-K4}}, {\ttfamily{ode45}}, and {\ttfamily{Radau}}. The number of time steps and the corresponding computational time are presented in Table \ref{tab-ode45-semilinear-beam} for some desired tolerances. As we mentioned in the beginning of this section, this problem is stiffer than the wave equation. To tackle this challenging situation, {\ttfamily{ode45}} needs more than 1 millions time steps to achieve the solution even for a low tolerance like $10^{-2}$. On the other hand, the implicit {\ttfamily{Radau}} scheme requires less number of time steps to obtain a desired accuracy. However, its computational cost is very high.
\begin{table}[h!]   \centering
\begin{tabular}{ |P{1.5cm} || P{1.25cm} |P{1.25cm} || P{1.25cm} | P{1.25cm}|| P{1.25cm} |P{1.25cm} |}
\hline
\multirow{2}{*}{Tolerance}   &  \multicolumn{2}{c||}{\ttfamily{EI-K4}} &  \multicolumn{2}{c||}{\ttfamily{ode45}} &
\multicolumn{2}{c|}{\ttfamily{Radau}} \\   
\cline{2-7} 
&  $M$ & Time & $M$  & Time & $M$  & Time\\
\hline 
$10^{-2}$ & 320   & 3.56s & 1067777 & 420.77s & 1663 & 369.91s\\ 
$10^{-5}$ & 2560 & 26.67s & 1068325 & 432.29s & 3043 & 699.78s  \\
$10^{-8}$& 20480 & 215.86s & 1073297 & 438.69s & 5234 & 1264.90s\\ 
\hline 
\end{tabular} \caption{Comparison among {\ttfamily{EI-K4}}, {\ttfamily{ode45}}, and {\ttfamily{Radau}} for a semilinear beam equation. The number of time steps for the exponential integrator is empirically chosen to reach the prescribed accuracy.} \label{tab-ode45-semilinear-beam}
\end{table} 

\end{ex}

\begin{ex} \textit{Adding all damping terms into the nonlinear part.} \label{ex-compare-addnonlinear}
Another common approach for solving \eqref{eq-example-wave} consists in merging the damping terms with the nonlinear terms and then employing various exponential integrators to solve the resulting semilinear equation. In this case, we have to solve the system 
\begin{align*} 
\dot{y}(t) = \widetilde{A} y(t) + \widetilde{F}(y(t)),
\end{align*}
where
\begin{align*} 
\widetilde{A} = \bmat{ 0  & I \\ -\alpha S - \delta I & 0 }, \quad \widetilde{F}(y(t)) = \widetilde{F} \pmat{u \\ w} =  \bmat{0 \\ g(u) + h(w) - \beta Sw - \gamma w }. 
\end{align*}
The matrix $S$ is either $S = S_w$ as in \eqref{eq-discrete-Sw} or $S = S_b$ as in \eqref{eq-discrete-Sb}.
The exponential function of $\widetilde{A}$ can be computed by the following formula 
\begin{align} \label{eq-exp-undamped}
\me^{t\widetilde{A}} = \bmat{\cos(t\Omega) & \Omega^{-1} \sin(t\Omega) \\ -\Omega \sin(t\Omega) & \cos(t\Omega)}, \quad \Omega = \sqrt{\alpha S + \delta I}, 
\end{align}
An explicit form of $\varphi_k(t\widetilde{A})$ was also presented in the literature (for example: see \cite{WangXu18}). We illustrate by two examples below the claim that this approach is more expensive and it also lacks accuracy.

 Consider a wave equation \eqref{eq-example-wave} with $\alpha=1,~\beta = 10^{-2},~\delta = 1,~\gamma = 10^{-1}$. The nonlinear term is $g(u) = -5u^3$. The initial conditions are $p(x) = 5\sin(5\pi x)$ and $q(x) = 5\cos(10 \pi x)$. We compute the solution at $T=1$ by using {\ttfamily{EI-SW21}} ($c_2=\frac{1}{3}$) with $M \in \{10,20,\dots, 10 \cdot 2^{10}\}$ time steps. We compare these numerical results with the reference solution evaluated by {\ttfamily{EI-K4}} with $M = 100000$ time steps. The convergence rates of two approaches are plotted in Figure \ref{fig-ex-adddamping-wave}. When we add all damping terms into the nonlinear part, the corresponding approximation is worse than the one given by our approach. Note that the errors are reduced approximately 100 times with our approach. Moreover, with a bigger $\Delta t$, we obtain the solution with an acceptable accuracy while the traditional approach needs much smaller time steps to achieve stability.

We next repeat example \ref{ex-beam1}. The solution at final time step $T=1$ is computed by using {\ttfamily{EI-SW21}} ($c_2=0.2$) with $M \in \{320,\dots, 320 \cdot 2^{7}\}$ time steps. These numerical solutions are compared with the reference solution obtained by using {\ttfamily{EI-SW4}} with $M = 100000$ time steps. The errors are plotted in Figure \ref{fig-ex-adddamping-beam}. While our approach works and preserves the convergence rate of the exponential integrator {\ttfamily{EI-SW21}}, the traditional approach fails even with a small $\Delta t$. Since the matrix $S_b$ is stiffer than the matrix $S_w$, it leads to the stability problem when the structural damping term $\partial_{xxxxt} u$ is added into the nonlinear part.   

In conclusion, the two examples clearly demonstrate the importance of using the matrix exponential of the linearization.
\begin{figure}[h]
\centering 
\begin{subfigure}[]{0.495\textwidth}
         \centering
         \includegraphics[width=\textwidth]{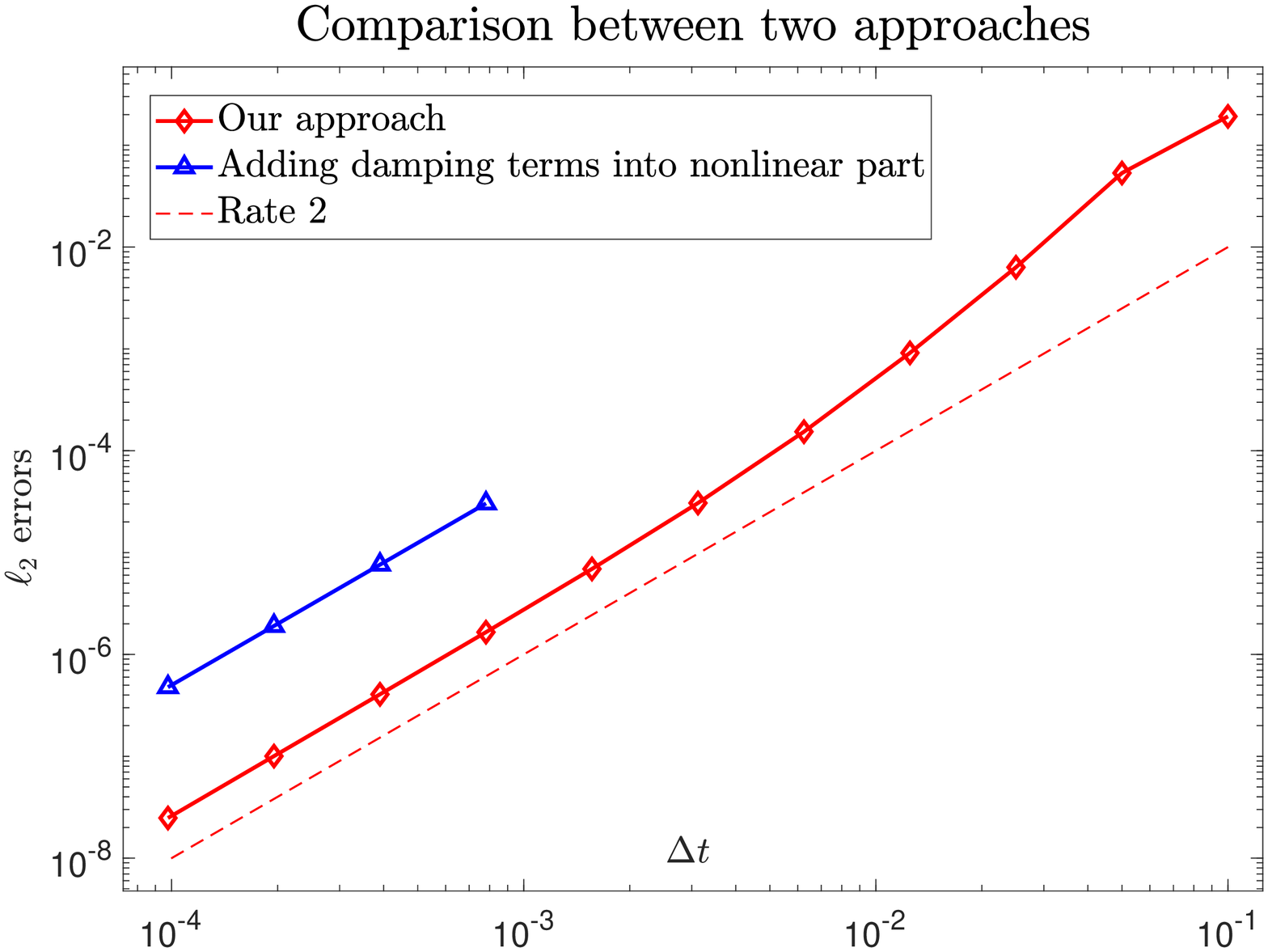}
         \caption{For a wave equation.}
         \label{fig-ex-adddamping-wave}
\end{subfigure}
\begin{subfigure}[]{0.495\textwidth}
         \centering
         \includegraphics[width=\textwidth]{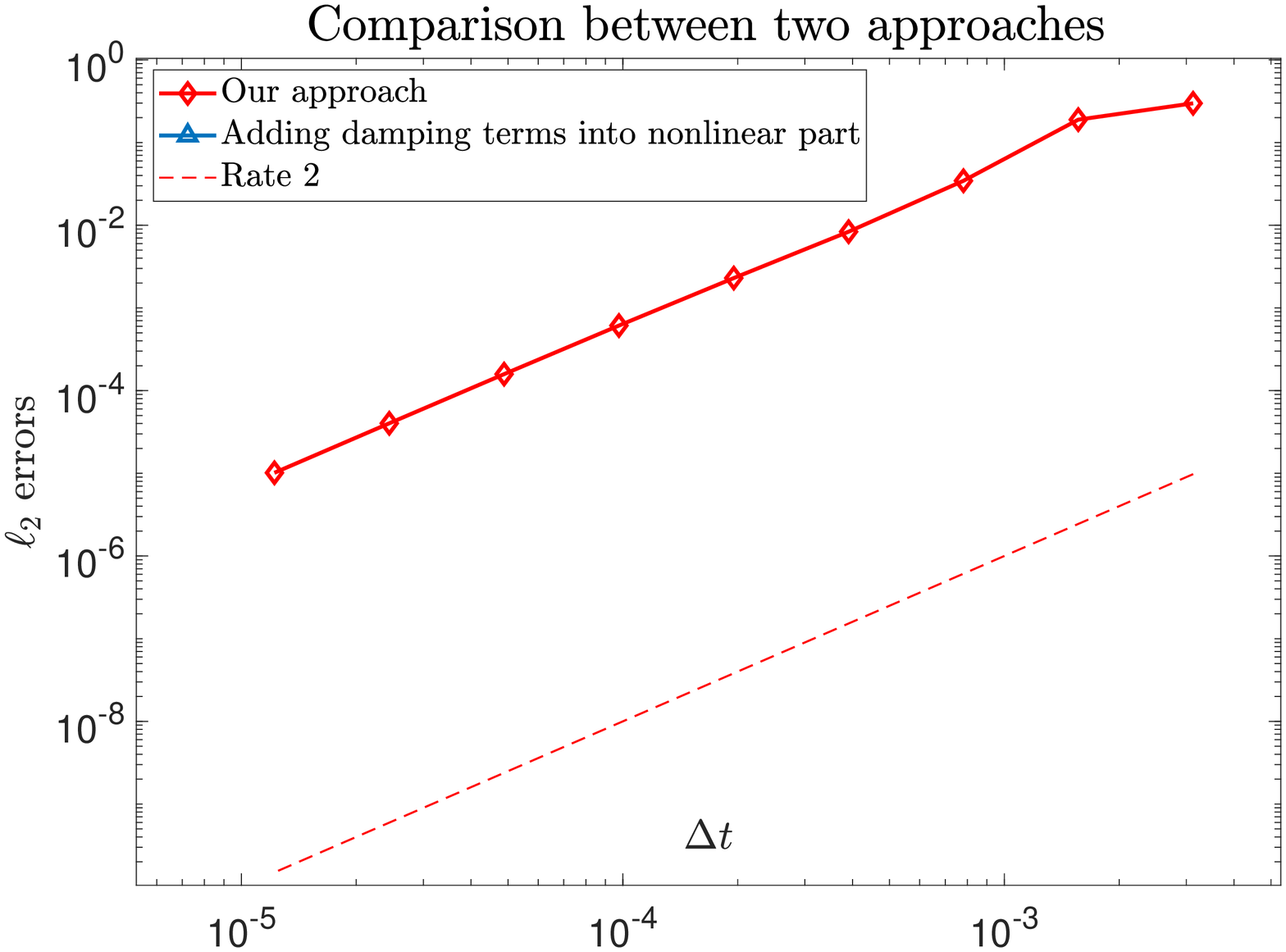}
         \caption{For a beam equation.}
         \label{fig-ex-adddamping-beam}
\end{subfigure}
\caption{The importance of using the matrix exponential of the linearization.}
\end{figure}

\end{ex}

\section{Conclusion}
We presented an approach to  cheaply compute the action of the matrix exponential $\me^{tA}$ as well as the action of the matrix functions $\varphi_k(tA)$ on a given vector by employing two linear transformations. Thus, the solution of certain linear differential equations can be computed in a fast and efficient way. By applying the exponential integrators in the literature, we can solve semilinear wave and semilinear beam equations. 

Note that the described procedure can be extended to the case 
\begin{align*}
A = \begin{bmatrix}
A_1 & A_2 \\ A_3 & A_4
\end{bmatrix}, \quad \text{where~~} [A_i, A_j] = A_i A_j - A_j A_i = 0  \quad \text{for~~} 1 \le i,j \le 4,~i \neq j.
\end{align*}  
Indeed, under the above assumption, the four matrices $A_i$ share the same eigenvalues and eigenvectors. Thus, there exist a matrix $Q$ and four corresponding diagonal matrices $D_i$ such that $A_i = Q D_i Q'$ for $1 \le i \le 4$. This implies that
\begin{align*}
A = \begin{bmatrix}
Q &  0  \\ 0 & Q 
\end{bmatrix}
 \begin{bmatrix}
D_1 &  D_2  \\ D_3 & D_4 
\end{bmatrix} \begin{bmatrix}
Q' &  0  \\ 0 & Q'
\end{bmatrix}
\end{align*}
Thus, the scheme can be analogously applied by evaluating the exponential of each $2 \times 2$ block matrix $\bmat{\lambda_{1,i} & \lambda_{2,i} \\ \lambda_{3,i} & \lambda_{4,i}}$ where $\lambda_{k,i}$ is an entry of the diagonal matrix $D_k$ with $1 \le k \le 4$.

\section*{Acknowledgement} We would like to thank the anonymous referees for the fruitful discussions which lead to improvements in the current version.

\end{document}